\pgfplotsset{compat=1.18}
\newrobustcmd{\MakeTitleCase}[1]{%
  \ifthenelse{\ifcurrentfield{title}}%
  {\MakeSentenceCase{#1}}%
  {#1}}
\newtheorem{lemma}{Lemma}
\newaliascnt{proposition}{lemma}
\newaliascnt{corollary}{lemma}
\newaliascnt{theorem}{lemma}
\newtheorem{theorem}[theorem]{Theorem}
\newaliascnt{definition}{lemma}
\newaliascnt{assumption}{lemma}
\newaliascnt{claim}{lemma}
\newaliascnt{notation}{lemma}
\newaliascnt{experiment}{lemma}
\newaliascnt{example}{lemma}
\newtheorem{example}[example]{Example}
\newaliascnt{remark}{lemma}
\theoremstyle{nonumberplain}
\newtheorem{proof}{Proof}
\newcommand{\N}{\ensuremath{\mathbb{N}}}
\newcommand{\NN}{\ensuremath{\mathbb{N}_0}}
\newcommand{\R}{\ensuremath{\mathbb{R}}}
\newcommand{\C}{\ensuremath{\mathbb{C}}}
\newcommand{\abs}[1]{\ensuremath{\left\vert#1\right\vert}}
\newcommand{\dx}{\mathrm{d}}
\newcommand{\e}{\mathrm{e}}
\renewcommand{\i}{\mathrm{i}}
\newcommand{\zb}[1]{\ensuremath{\boldsymbol{#1}}}
\DeclareMathOperator*{\diag}{diag}
\DeclareMathOperator*{\argmin}{arg\,min}
\renewcommand{\Re}{\mathrm{Re}}
\renewcommand{\Im}{\mathrm{Im}}
\renewcommand{\d}{\, \mathrm{d}}
\newcommand{\norm}[1]{\left\lVert #1
  \right\rVert}
\newcommand\conj[1]{\overline{#1}}
\newcommand{\ba}{{\boldsymbol a}}
\newcommand{\cD}{{\mathcal D}}
\newcommand{\cR}{{\mathcal R}}
\newcommand{\tT}{\mathrm{T}}
\title{Time-Harmonic Optical Flow with \\
 Applications in Elastography}
\date{}
\author{%
  Oleh Melnyk\footnotemark[1]\\%
  {\footnotesize\href{mailto:melnyk@math.tu-berlin.de}{melnyk@math.tu-berlin.de}}
  \and
  Michael Quellmalz\thanks{Technische Universität Berlin, Institute of Mathematics, Straße des 17. Juni 136, D-10623 Berlin, Germany}\\%
  {\footnotesize\href{mailto:quellmalz@math.tu-berlin.de}{quellmalz@math.tu-berlin.de}}
  \and
  Gabriele Steidl\footnotemark[1]\\%
  {\footnotesize\href{mailto:steidl@math.tu-berlin.de}{steidl@math.tu-berlin.de}}
  \and
  Noah Jaitner\thanks{Charité Universitätsmedizin Berlin, Department of Radiology, Charitéplatz 1, D-10117 Berlin, Germany}\\%
  {\footnotesize\href{mailto:noah.jaitner@charite.de}{noah.jaitner@charite.de}}
  \and
  Jakob Jordan\footnotemark[2]\\%
  {\footnotesize\href{mailto:jakob.jordan@charite.de}{jakob.jordan@charite.de}}
  \and
  Ingolf Sack\footnotemark[2]\\%
  {\footnotesize\href{mailto:ingolf.sack@charite.de}{ingolf.sack@charite.de}}
}
\begin{document}

\def\sectionautorefname{Section}
\def\subsectionautorefname{Section} 

\maketitle

\begin{abstract}
In this paper, we propose
mathematical models for reconstructing the optical flow in time-harmonic elastography.
In this image acquisition technique, 
the object undergoes a special time-harmonic oscillation with known frequency so that only the spatially varying amplitude of the velocity field has to be determined.
This allows for a simpler multi-frame optical flow analysis using Fourier analytic tools in time. 
We propose three variational optical flow models 
and show how their minimization can be tackled via
Fourier transform in time.
Numerical examples with synthetic as well as 
real-world data demonstrate the benefits
of our approach.

\textit{Keywords:} optical flow, elastography, Fourier transform, iteratively reweighted least squares, Horn--Schunck method
\end{abstract}

\section{Introduction}

The tracking of motion along the sequence of image frames is a prominent problem in computer vision.
The main goal consists in finding the motion vector field between image frames of moving or deformed objects.
Optical flow methods trace back to the works of Horn--Schunck \cite{HornSchunck1981} and Lucas--Kanade \cite{LucasKanade1981}.
The former 
is an instance of variational methods, which have proven to be a powerful tool for this task.
The idea is to minimize a Tikhonov-type energy functional consisting of data fidelity and smoothness terms.
For an overview, we refer to the review papers \cite{FortunBouthemyKervrann2015,WeickertBruhnBroxPapenber2006,BePeSch15} as well as the performance comparisons \cite{BarronFleetBeauchemin1994}, and the more recent meta performance analysis \cite{,ZhaiXiangLvKong2021}.
Practical implementations of the optical flow estimation further rely on a combination of heuristics such as a coarse-to-fine pyramid, image warping, and inertial smoothing between iterations as highlighted in
the extensive experimental evaluations in \cite{SunRotBla14,SunRotBla10}.
Finally, neural network-based methods such as RAFT \cite{TD2020} and its multi-frame variants 
\cite{WCCLHHS2023} have shown state-of-the-art solutions for generalized optimal flow problems.
Motion magnification based on RAFT was studied in our previous work \cite{FHSS2023}.
An excellent overview of neural network-based solutions is presented in \cite{HuangGithub}.

In this paper, we consider a special optical flow problem for multiple image frames arising from optical time-harmonic elastography \cite{Jor23}. 
More precisely, an external acoustic source, 
whose frequency is precisely controlled by a wave generator, 
causes a time-harmonic oscillation of the specimen. 
Here, we assume that the flow velocity has the special structure
$v(t,x) = a(x) \Re(\e^{\i \omega t})\in\R^d$, which consists of a harmonic wave in time $t\ge0$ with known frequency $\omega$ and an amplitude $a(x)$ depending only on the spatial variable $x\in\R^d$.
Having reconstructed the velocity field $v$, it can be used in a subsequent step e.g. for the analysis of the material properties considered in the time-harmonic elastography experiment.
In particular, it is possible to reconstruct the shear modulus of the considered material
which is an important property that characterizes the mechanical behavior of the material, see \cite{Sack23}. 

Given a sequence of images of the specimen undergoing a harmonic deformation over multiple time steps $t$, we have only to reconstruct the time-independent amplitude~$a$
to get access to the displacement.
Therefore, we can efficiently utilize the image frames over the whole time period, i.e., a multiframe approach 
for reconstructing  the velocity field.
In contrast, most variational methods for general optical flow reconstruction rely just on two (consecutive) image frames. Nevertheless, there exist a few variational multiframe approaches, one of the first ones was that of Schn\"orr and Weickert \cite{WeickertSchnorr2001}.
Furthermore, a periodic model, which is not necessarily a single harmonic, was considered in the context of cardiac motion \cite{LiYang2010} and solved with the Horn--Schunck method.
This model was extended in \cite{Qi2011} to a 
smoothed version of the total variation norm and solved via gradient descent.
Due to their different nature, our model offers a lower numerical complexity, taking advantage of the fast Fourier transform and the fact that we need to reconstruct only a single amplitude image.

In this paper, we propose three models to solve the time-harmonic optical flow.
The first one is an adaption of the classical Horn--Schunck approach with quadratic data fidelity and smoothness term.  We give an explicit characterization of the minimizers of the respective energy functional via the Fourier transform in time. Based on this, we suggest solving it efficiently as a least squares problem.
The second model comprises an $L_1$ data fidelity term, making it more robust to noise, and a total variation smoothness term.
The third uses ``the best of both worlds'' by combining the data term of the second with the smoothness of the first model, therefore yielding a smoother amplitude field as it is expected from the physical point of view.
After reformulating in the Fourier domain, both models are solved with an iteratively reweighted least squares (IRLS) method \cite{Jung.2024,KumVerSto21}.
The accuracy and the lower arithmetic complexity of our algorithms are validated
in numerical experiments with both simulated and experimental data acquired in a gel phantom.



\paragraph{Outline of the paper.}
We start by providing our mathematical optical flow model in time-harmonic elastography in Section \ref{sec:OF}. Then, in Section \ref{sec:var}, we propose three variational models to approximate the optical flow. We derive formulas
for finding the minimizing amplitude of the optical flow field using Fourier transforms.
Implementation details are discussed in Section~\ref{sec:numerics_real}.
Both synthetic and real-world examples which demonstrate the very good numerical performance of our approach are given in 
Section~\ref{sec:numerics}.
Final conclusions are drawn in Section~\ref{sec:conclusion}.

\section{Time-harmonic optical flow} \label{sec:OF}
In this section, we provide the continuous optical flow model for the special setting 
when the object undergoes a time-harmonic oscillation with known frequency.

Let $T>0$ be the observational time and $\Omega$ be a bounded domain in $\R^d$.
We are given gray-value images $I\colon [0,T] \times \Omega  \to \R$ in $C^1$, which means that they are continuously differentiable in both time and space.
In our applications, we deal with sequences of two-dimensional images, i.e.\ $d=2$ and $\Omega$ is a rectangular domain.
From the experimental setting, we can assume that the so-called ''brightness constancy'' assumption holds true, meaning that the image gray-values (pixel intensities)
move in space, but do not change their values.
Roughly speaking,
we assume for all $x \in \Omega$ that 
\begin{equation} \label{eq:bca}
I(t,\varphi(t,x)) = I(0,x) \quad \text{for all} \quad t \in [0,T],
\end{equation}
where $\varphi\colon [0,T] \times \Omega  \to \R^d$ fulfills for every $x \in \Omega$ the ordinary differential equation (ODE)
\begin{equation}  \label{eq:dphi}
\begin{aligned}
\frac{\dx}{\dx t} \varphi (t,x) &= v(t, \varphi(t,x)), \quad t \in [0,T],\\
\varphi (0,x) &= x,
\end{aligned}
\end{equation}
with a sufficiently smooth velocity field $v\colon [0,T] \times \Omega  \to \R^d$.
Then \eqref{eq:bca} can be rewritten as
\begin{equation} \label{eq:of}
0 = \frac{\dx}{\dx t} I(t,\varphi(t,x)) 
= \partial_t I(t, \varphi (t,x)) + \nabla I(t,\varphi (t,x))^\tT \partial_t \varphi (t,x),
\end{equation}
where $\nabla$ denotes the gradient with respect to the spatial component in $\R^d$ and ${\,}^\tT$ is the transposition of a matrix.
By \eqref{eq:dphi}, and replacing $ \varphi (t,x) \to x$ for every $t \in [0,T]$,
the image sequence fulfills the partial differential equation
\begin{equation}  \label{eq:flow}
\begin{aligned}
\partial_t I(t,x) + \nabla I(t,x)^\tT v(t,x)
&= 0, \quad &(t,x) &\in [0,T] \times \Omega,\\
I (0,x) &= I_0(x), \quad &x &\in \Omega, 
\end{aligned}
\end{equation}
where $I_0$ is the first image frame.

For the time-harmonic optical flow elastography \cite{Jor23}, the object undergoes a periodic, time-harmonic displacement with frequency $\omega\in\R$, which is known from the experimental setup. 
This implies that the displacement $u\colon [0,T] \times \Omega \to \R^d$ in
$$
\varphi(t,x) = x + u(t,x)
$$
is time-harmonic, i.e.,
\begin{equation} \label{eq:u-harmonic}
u(t,x) = \Re 
( \tilde a(x) \, \e^{\i t \omega} ) - \Re (\tilde a(x)),
\end{equation}
with a complex amplitude 
$
\tilde a \in\C$, where the term $\Re( \tilde a(x))$ comes from the second line of \eqref{eq:dphi}. 
However, it appears to be hard to work with this approach in \eqref{eq:flow}, because the reconstruction of $\tilde a$ requires solving a coupled system of the two differential equations \eqref{eq:dphi} and \eqref{eq:flow}.
Therefore, we propose to use instead the ''ansatz''
\begin{equation}\label{eq:va}
\begin{aligned} 
v(t,x) 
&= \Re \left( a(x) \, \e^{\i t \omega} \right)\\
&=
a_{\text{R}}(x) \cos(t \omega) - a_{\text{I}}(x) \sin(t \omega)
,\qquad t\in[0,T],\  x\in\Omega,
\end{aligned}
\end{equation}
where the amplitude $a = a_{\text{R}} + \i a_{\text{I}}
$ with $a_{\text{R}}, a_{\text{I}} \in\R$ is again complex and we always assume that $a$ is $C^2$.
The advantage of this approach is that the systems \eqref{eq:dphi} and \eqref{eq:flow} are no longer coupled. We first reconstruct $a$ as a solution of \eqref{eq:flow}, where we substitute $v$ by \eqref{eq:va}, and subsequently obtain the displacement~$u$ by solving \eqref{eq:dphi}. The relation between both approaches is discussed in the following paragraph.

\paragraph{Time-harmonic approach with $v$ versus $u$.}
Let us consider the relation in the one-dimensional case $d=1$.
In particular, we are interested in the question if 
the specific time-harmonic structure \eqref{eq:va} of the velocity $v$ 
implies that the deformation $\varphi$ is also periodic 
or even that $u$ is time-harmonic. 
We will see that the first property is fulfilled at least for some choices of the amplitude $a$.
For the special amplitude function 
\begin{equation} \label{aha}
  a(x) =  \alpha (x) \e^{\i\phi_0} 
\end{equation}
with a  locally Lipschitz continuous real-valued function $\alpha$ 
and the corresponding velocity  
$$v(t,x) = \alpha(x) \cos(t\omega + \phi_0),$$ 
$\varphi$ has to fulfill
\begin{equation}  \label{eq:dphi-alpha}
\frac{\dx}{\dx t} \varphi(t,x) = \alpha \left(\varphi(t,x) \right) \cos(t\omega + \phi_0)
,\quad \varphi (0,x) = x.
\end{equation}
This initial value problem can be solved by the separation of variables resulting for every $x \in \R$ in a solution $\varphi(t,x)$ which is periodic in $t$.
In particular, if $1/\alpha$ has a bijective antiderivative $A\colon\R\to\R$, which is e.g.\ the case when $\alpha$ is bounded and bounded away from zero, then the solution is 
$$
\varphi(t,x)
=
A^{-1}\left( A(x) - \frac{\sin(t\omega+\phi_0) - \sin(\phi_0)}{\omega} \right).
$$
A first order Taylor approximation of $A^{-1}$ around $A(x)$ yields the approximate displacement
$$
u(t,x)
=
\varphi(t,x) - x
\approx
-\alpha(x) \frac{\sin(t\omega+\phi_0) - \sin(\phi_0)}{\omega},
$$
which is a harmonic oscillation of the form \eqref{eq:u-harmonic}.

\begin{figure}[!t] \centering
 \newcommand{\cc}{0.2}
\tikzset{font=\small}
	\begin{tikzpicture}
	\begin{axis}[xlabel=$t$, 
	width=.45\textwidth, height=0.32\textwidth, xmin=0, xmax=6.28,
	xtick={0,1.57,3.14,4.71, 6.28},
	xticklabels={0,$\pi/2$,\vphantom{1}$\pi$,$3\pi/2$,$2\pi$},
    every axis y label/.style={at={(ticklabel* cs:1.03)}, anchor=south,},
    cycle list name=exotic, 
	legend pos=outer north east, legend style={cells={anchor=west}}]
    \pgfplotsinvokeforeach{3, ..., -3} {
	\addplot+ [mark=none, thick,smooth, samples=50, domain=0.001:6.5] {#1*0.4*exp(\cc*sin((180*x)/pi))};
    \addlegendentry{$\varphi(t,\pgfmathparse{#1*0.4}\pgfmathprintnumber\pgfmathresult)$}; }
	\end{axis}
	\end{tikzpicture} 
    \caption{Trajectories $\varphi(t,x)$ corresponding to the velocity amplitude $a(x)=\cc\,x$ from \autoref{ex:periodic} with different starting values $x$ for $\omega=1$.\label{fig:traj-v}}
\end{figure}

\begin{figure}[!b]
    \centering
    \includegraphics[width = 1\textwidth]{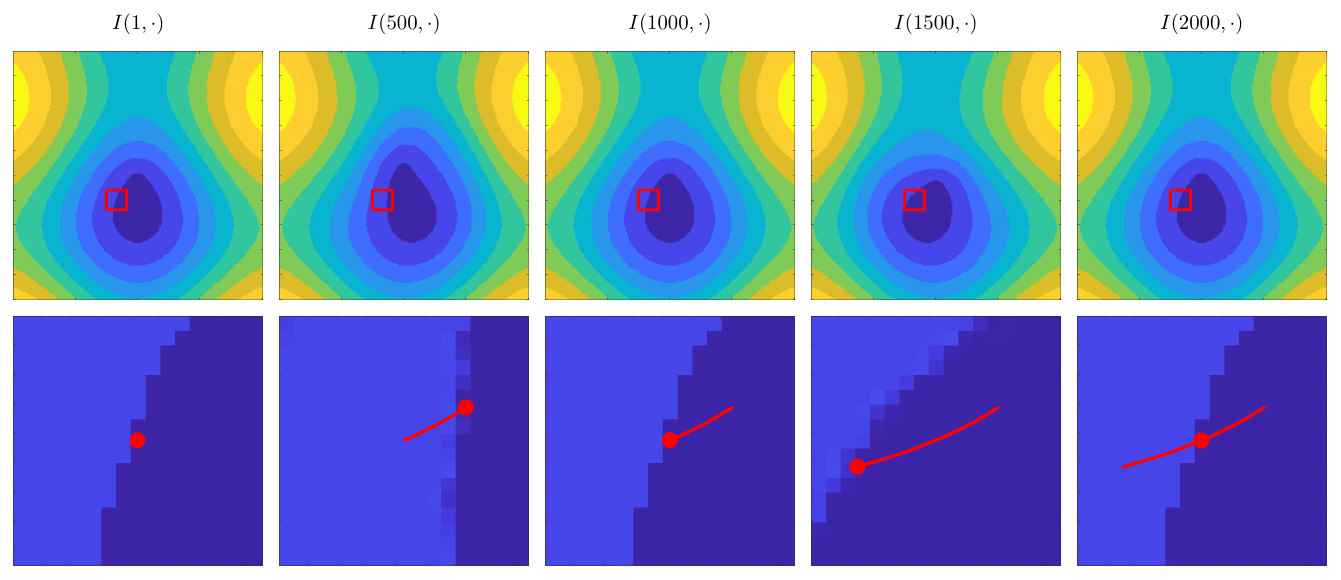}
    \caption{Two-dimensional images $I(t,\cdot)$ corresponding to a harmonic velocity \eqref{eq:va} with a real-valued amplitude function $a$ and period $T=2000$. The displacement $u$ is periodic. The second row is the zoom-in of the first row to the red rectangle.
    The red $\color{red}\pmb\bullet$ depicts the position $\varphi(t,x)$ of a single point $x$ at time $t$ and the red curve is its trajectory.}
    \label{fig:2d disp periodic}
\end{figure}

\begin{example}[Periodic deformation] \label{ex:periodic}
Let $a(x) = cx$, $x\in\R$, for some $c\in\R$. Then the solution of \eqref{eq:dphi} becomes
\begin{equation}
\varphi(t,x)
=
x\,\e^{-\frac{c}{\omega}\sin(t\omega)},
\end{equation}
which is illustrated in \autoref{fig:traj-v}.
In particular, the deformation $\varphi$ is again $\frac{2\pi}{\omega}$ periodic in~$t$.
%
An illustration of a two-dimensional example, where the resulting displacement $u$ is periodic, is given in \autoref{fig:2d disp periodic}.
\end{example}

\begin{figure}[!t]\centering
\tikzset{font=\small}
	\begin{tikzpicture}
	\begin{axis}[xlabel=$t$, 
	width=.45\textwidth, height=0.32\textwidth, xmin=0, xmax=6.28,
	xtick={0,1.57,3.14,4.71, 6.28},
	xticklabels={0,$\pi/2$,$\vphantom{1}\pi$,$3\pi/2$,$2\pi$},
    every axis y label/.style={at={(ticklabel* cs:1.03)}, anchor=south,},
    cycle list name=exotic, 
	legend pos=outer north east, legend style={cells={anchor=west}}]
 
    \pgfplotsinvokeforeach{2,...,0} {
	\addplot+ [mark=none, thick,smooth, samples=50, domain=0.001:6.5] {x+pi-2*rad(atan(x+cot(deg(#1/2/2))))};
    \addlegendentry{$\varphi(t,\pgfmathparse{#1/2}\pgfmathprintnumber\pgfmathresult)$}; }

	\end{axis}
	\end{tikzpicture}\hfill
 
    \caption{Trajectories $\varphi(t,x)$ of \autoref{ex:nonperiodic} for different starting values $x$ with $\omega=1$.\label{fig:nonperiodic}}
\end{figure}

\begin{figure}[!b]
    \centering
    \includegraphics[width = 1\textwidth]{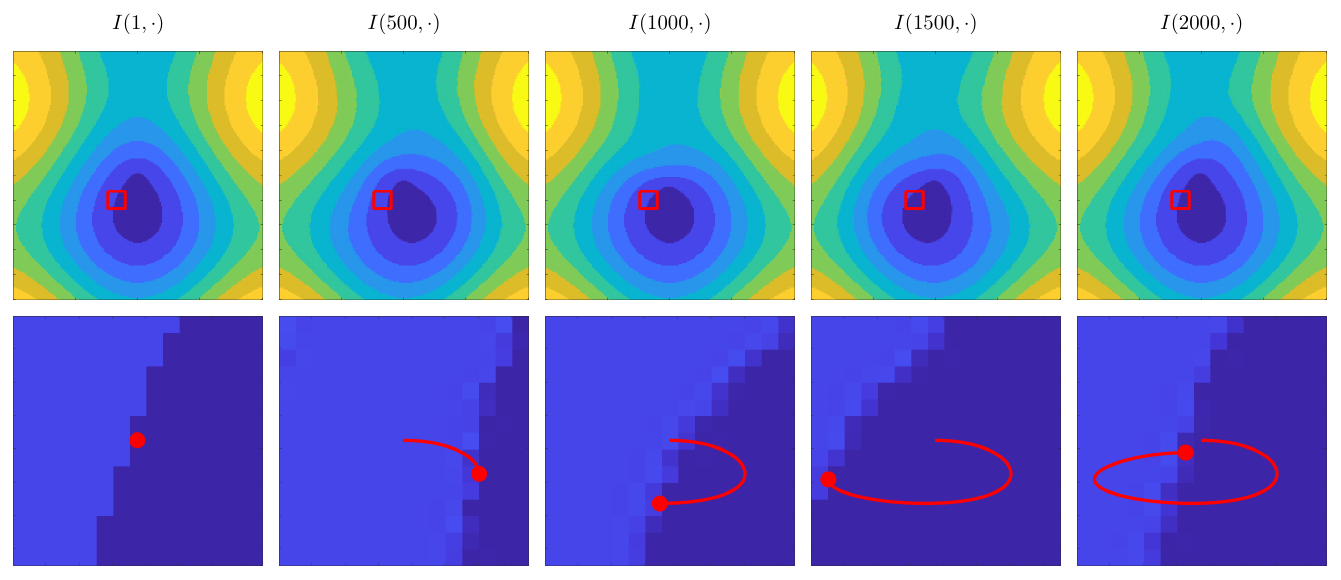}
    \caption{Two-dimensional images $I(t,\cdot)$ corresponding to a harmonic velocity \eqref{eq:va} with an amplitude function of the form $a(x)\in\i\R\times\R$ and period $T=2000$. The resulting displacement $u$ is not periodic. The second row is the zoom-in of the first row to the red rectangle.
    The red $\color{red}\pmb\bullet$ depicts the position $\varphi(t,x)$ of a single point $x$ at time $t$ and the red curve is its trajectory.}
    \label{fig:2d disp nonperiodic}
\end{figure}

\begin{example}[Non-periodic deformation] \label{ex:nonperiodic}
  If the amplitude $a$ does not have the form \eqref{aha}, the displacement $u$ of the motion might be non-periodic.
  Consider the complex amplitude $a(x)=\e^{-\i x}$, $x\in\R$, and frequency~$\omega=1$.
  Then the velocity is
  $$
  v(t,x)
  =
  \Re(\e^{\i(t- x)})
  =
  \cos(t-x).
  $$
  A straightforward computation using trigonometric identities verifies that
  $$
  \varphi(t,x)
  =
  t + 2 \operatorname{arccot}\left(t+\cot\tfrac{x}{2}\right)
  $$
  for $x\in(0,\pi)$ is a solution of \eqref{eq:dphi} in this case, in particular we have
  $$
  \frac{\dx}{\dx t} \varphi(t,x)
  =
  1-\frac{2}{1+(t+\cot\frac{x}{2})^2}
  =
  \cos(t-\varphi(t,x)).
  $$
  Due to the asymptotics of the inverse cotangent, we have $\varphi(t,x)\to\infty$ for $t\to\infty$ and fixed $x$, cf.\ \autoref{fig:nonperiodic}. 
  Hence, the same holds also for the displacement $u(t,x)$.

  An illustration of a two-dimensional example, where the resulting displacement $u$ is non-periodic, is given in \autoref{fig:2d disp nonperiodic}.
\end{example}

An ``opposite'' example showing a time-harmonic displacement $u$ that results in a periodic
velocity $v$, which is however not time-harmonic, is given in Appendix \ref{app:another_ex}.

\section{Variational models for time-harmonic optical flow} \label{sec:var}
We want to approximate the solution of the flow equation \eqref{eq:flow} with the time-harmonic velocity field \eqref{eq:va}
from given measurements $I(t,x)$, $t \in [0,T]$, $x \in \Omega$, where 
we assume that $p \in \N$ periods of oscillations were observed and $T \coloneqq \frac{2\pi p}{\omega}$.
More precisely, we aim to recover the amplitude $a$ of the velocity field $v$. 
Since the optical flow equation is ill-posed, this is in general tackled 
by solving a variational problem of the form
\begin{equation} \label{eq:HS-general}
E(v)
\coloneqq
\int_{\Omega} \int_{0}^{T}
\Big( \cD
\big(  \underbrace{\nabla I(t,x)^\tT v(t,x)
+ \partial_tI(t,x) }_{=:G_v(t,x)} \big)
+ \lambda \cR(\nabla v(t,x))
\Big) \d t
\d x, 
\end{equation}
with a \emph{data fidelity term} $\cD\colon \R\to\R_{\ge0}$ enforcing the physics-based relation \eqref{eq:flow} 
and a \emph{regularizer} $\cR\colon \R^{d\times d}\to\R_{\ge0}$, which aims to make the
problem well-posed and contains, from a Bayesian point of view, prior knowledge on the velocity field.
Both terms are coupled by a regularization parameter $\lambda>0$.
For an overview of data fidelity and regularization terms used in optical flow between two image frames, see e.g.\ \cite{FortunBouthemyKervrann2015}.
In this section, we will discuss the following choices: 
\begin{itemize}
\item[I)] $\cD (G_v) \coloneqq |G_v|^2$ and $\cR \coloneqq \|\nabla v
\|^2 = \sum_{j,k=1}^d \abs{\partial_{x_j} v_k}^2$,
\item[II)] $\cD (G_v) \coloneqq |G_v|$ and $\cR \coloneqq \|\nabla v\|$,
\item[III)] $\cD (G_v) \coloneqq |G_v|$ and $\cR \coloneqq \|\nabla v\|^{2}$,
\end{itemize}
where $\|\cdot\|$ denotes the Euclidean norm in $\R^{d \times d}$, also known as Frobenius norm.
Model I resembles the Horn--Schunck model in image processing \cite{HornSchunck1981}
and is appropriate for image data corrupted by Gaussian noise and a sufficiently smooth
velocity field. 
Model II is more robust against noise in the image frames, especially  heavy-tailed noise stemming, e.g., 
from the Laplace distribution
and also avoids penalizing jumps
in the derivative of the velocity field too much, see the total variation model in image denoising \cite{ROF1992} and for its counterpart in optical flow, e.g.\ \cite{ADK1999, HSSW2002}.
Such models appeared to be beneficial for early crack detection in materials
in tensile tests \cite{BalBecEifFitSchSte19}.
Note that often the above model was only considered for two consecutive image frames
and not over the whole sequence of images. 
Finally, Model III combines the robustness to noise from Model II with the regularization term of Model I and aims for a smoother reconstruction.

In the following, we will show how the Models I -- III can be 
refined using the special time-harmonic structure of the velocity field in \eqref{eq:va}.
In particular, the incorporation of the whole time interval, i.e. of all image frames at the same time, becomes straightforward. To this end, we need 
the {Fourier transform} of  functions $f \in L_1(\R)$ 
defined by
\begin{equation} \label{eq:FT}
\mathcal F[f](\nu)
\coloneqq
\int_{\R}  f(t)\, \e^{-\i t \nu} \d t
,\qquad \nu\in\R.
\end{equation}
If $f$ is supported on $[0,T]$ as it will be the case in our applications, 
the integral can be just taken over $[0,T]$.
For a vector-valued function $f$, 
the Fourier transform is defined componentwise. In what follows, we always apply the Fourier transform in the time variable $t$ and leave the spatial variables fixed.

\subsection{Model I} \label{sec:L2L2}
We start by minimizing the functional 
\begin{equation} \label{eq:HS}
E(v)
\coloneqq
\int_{\Omega} \int_{0}^{T}
\left( 
\abs{ G_v(t,x) }^2
+ \lambda \norm{\nabla v(t,x)}^2
\right) \d t
\d x ,
\end{equation}
where we are only interested the special velocity fields $v$ in \eqref{eq:va}.
Then the minimizer of $E(v)$ is described by the following theorem.

\begin{theorem}\label{thm: Horn-Schunck harmonic}
For a given frequency $\omega \in \R$ and $p \in \N$, set 
$T \coloneqq 2\pi p /\omega$.
Let $I\colon [0,T] \times \Omega \to \R$ be an image sequence in $C^1$ and
$v\colon [0,T] \times \Omega  \to \R^d$ a time-harmonic velocity in $C^2$ of the form \eqref{eq:va}. 
Then, the minimizing amplitude $a$ corresponding to the velocity $v$ in \eqref{eq:HS} 
satisfies for all $x \in \Omega$ the equation
\begin{equation} \label{eq: linear system corrected}
\lambda T
\Delta a(x) 
= \mathcal F[\nabla I \nabla I^\tT] (2\omega,x) \overline{a(x)}
+ \mathcal F[\nabla I \nabla I^\tT] (0,x)
a(x)
+ 2
\mathcal F[\partial_t I \, \nabla I](\omega, x) ,
\end{equation}
where $\overline{a}$ denotes the complex conjugate of $a$ and $\Delta$ is the Laplace operator.
\end{theorem}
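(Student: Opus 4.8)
The plan is to treat \eqref{eq:HS} as a quadratic variational problem in the two real fields $a_{\text{R}}, a_{\text{I}}$ that parametrize $v$ through \eqref{eq:va}, to carry out the time integration explicitly so that only Fourier coefficients of the image data survive, and then to read off the Euler--Lagrange equations. First I would insert $v(t,x) = a_{\text{R}}(x)\cos(t\omega) - a_{\text{I}}(x)\sin(t\omega)$ into $G_v = \nabla I^\tT v + \partial_t I$ and expand the integrand of \eqref{eq:HS} as
\begin{equation}
\abs{G_v}^2 = v^\tT (\nabla I\,\nabla I^\tT)\, v + 2\,(\partial_t I)\,\nabla I^\tT v + (\partial_t I)^2 .
\end{equation}
The last summand is independent of $a$ and drops out of any variation, so only the first two terms matter.

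The decisive step is the time integration $\int_0^T(\cdot)\d t$ over the full observation window $T = 2\pi p/\omega$. Writing $\cos^2(t\omega) = \tfrac{1}{2}(1+\cos(2t\omega))$, $\sin^2(t\omega) = \tfrac{1}{2}(1-\cos(2t\omega))$, and $\cos(t\omega)\sin(t\omega) = \tfrac{1}{2}\sin(2t\omega)$, the quadratic form $v^\tT(\nabla I\,\nabla I^\tT)v$ produces, after integration, only the matrices $\mathcal F[\nabla I\,\nabla I^\tT](0,x)$ and $\mathcal F[\nabla I\,\nabla I^\tT](2\omega,x)$, since $\int_0^T \nabla I\,\nabla I^\tT\cos(2t\omega)\,\d t = \Re\mathcal F[\nabla I\,\nabla I^\tT](2\omega,x)$ and likewise the $\sin$-term gives $-\Im\mathcal F[\nabla I\,\nabla I^\tT](2\omega,x)$. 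This is precisely the origin of the frequencies $0$ and $2\omega$ in \eqref{eq: linear system corrected}. In the same way the linear term $2(\partial_t I)\nabla I^\tT v$ integrates to expressions in $\Re\mathcal F[\partial_t I\,\nabla I](\omega,x)$ and $\Im\mathcal F[\partial_t I\,\nabla I](\omega,x)$, yielding the frequency $\omega$. The regularizer is the simplest piece: since $\nabla v = \nabla a_{\text{R}}\cos(t\omega) - \nabla a_{\text{I}}\sin(t\omega)$ involves no image data, the orthogonality relations give $\int_0^T \norm{\nabla v}^2\d t = \tfrac{T}{2}\bigl(\norm{\nabla a_{\text{R}}}^2 + \norm{\nabla a_{\text{I}}}^2\bigr)$ directly.

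At this point \eqref{eq:HS} has been reduced to a spatial functional $\widetilde E(a_{\text{R}}, a_{\text{I}}) = \int_\Omega(\dots)\,\d x$ whose integrand is quadratic in $a_{\text{R}}, a_{\text{I}}$ and in their gradients. I would then compute the first variations with respect to $a_{\text{R}}$ and $a_{\text{I}}$ separately, using that $\mathcal F[\nabla I\,\nabla I^\tT](0,x)$, $\Re\mathcal F[\nabla I\,\nabla I^\tT](2\omega,x)$ and $\Im\mathcal F[\nabla I\,\nabla I^\tT](2\omega,x)$ are all symmetric matrices (being built from the outer product $\nabla I\,\nabla I^\tT$), so that the derivative of each quadratic form is the expected matrix--vector product. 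The quadratic regularizer contributes, after integration by parts, the terms $-\lambda T\,\Delta a_{\text{R}}$ and $-\lambda T\,\Delta a_{\text{I}}$, provided the boundary contribution vanishes under natural (Neumann) boundary conditions. Setting both variations to zero gives two coupled real elliptic equations.

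The final and most error-prone step is to recombine these two real equations into the single complex identity \eqref{eq: linear system corrected}. The key algebraic observation is that the mixed appearance of $\Re\mathcal F[\nabla I\,\nabla I^\tT](2\omega,x)$ and $\Im\mathcal F[\nabla I\,\nabla I^\tT](2\omega,x)$ across the $a_{\text{R}}$- and $a_{\text{I}}$-equations is exactly what the product $\mathcal F[\nabla I\,\nabla I^\tT](2\omega,x)\,\overline{a}$ with the \emph{conjugate} $\overline{a} = a_{\text{R}} - \i a_{\text{I}}$ generates, whereas the frequency-$0$ matrix multiplies $a = a_{\text{R}} + \i a_{\text{I}}$ itself. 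I expect the main obstacle to be bookkeeping rather than conceptual: keeping the half-angle factors, the signs introduced by the kernel $\e^{-\i t\nu}$ of $\mathcal F$ (which send $\int\sin$ to $-\Im$), and the symmetry of the coefficient matrices consistent, so that adding the real equation to $\i$ times the imaginary one collapses precisely to \eqref{eq: linear system corrected}.
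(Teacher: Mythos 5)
Your proposal is correct and follows essentially the same route as the paper's proof: insert the ansatz \eqref{eq:va}, integrate out time via the half-angle identities so that only the Fourier coefficients of $\nabla I\,\nabla I^\tT$ at frequencies $0$ and $2\omega$ and of $\partial_t I\,\nabla I$ at $\omega$ survive, derive the Euler--Lagrange equations for $a_{\text{R}}$ and $a_{\text{I}}$, and recombine them by adding $\i$ times the imaginary-part equation, which is exactly how the $2\omega$-coefficient ends up multiplying $\overline{a}$ while the zero-frequency coefficient multiplies $a$.
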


\begin{proof}
Inserting \eqref{eq:va}
into $E$ and splitting the amplitude $a = a_{\text{R}} +  a_{\text{I}}$ as in \eqref{eq:va}, we obtain 
\begin{align*}
E(v)
& =
\int_{\Omega}
\int_{0}^{T}
\left( 
\left( \nabla I(t,x)^\tT a_{\text{R}}(x) \cos(t \omega) - \nabla I(t,x)^\tT a_{\text{I}}(x) \sin( t \omega)  
+ \partial_tI(t,x) \right)^2 \right. \\
& \left. \qquad \qquad + \lambda \norm{\nabla a_{\text{R}}(x) \cos(t \omega) - \nabla a_{\text{I}}(x) \sin( t \omega)}^2
\right)
\d t
\d x .
\end{align*}
Since we integrate over $p$ full periods, this becomes
\begin{align}
E(v)
& =
\int_{\Omega}
\left( 
\int_{0}^{T}
\left( \nabla I(t,x)^\tT a_{\text{R}}(x) \cos(t \omega) - \nabla I(t,x)^\tT a_{\text{I}}(x) \sin(t \omega)  
+ \partial_t I(t,x) \right)^2 \right. 
\d t \nonumber \\
& \left. \qquad \qquad + \frac{\lambda T}{2} \left(\norm{\nabla a_{\text{R}}(x)}^2 + \norm{\nabla a_{\text{I}}(x)}^2\right)
\right)
\d x. \label{eq: regularizer simplified}
\end{align}
The Euler--Lagrange equations 
yield that the minimizer $a$ has to satisfy for all $i=1,\dots,d$ the equations
\begin{align*}
\lambda T \sum_{j=1}^{d}
\partial^2_{x_j} (a_{\text{R}})_i(x) 
= 
\int_{0}^{T} 2 \Big( & \nabla I(t,x)^\tT  a_{\text{R}}(x) \cos(t \omega) - \nabla I(t,x)^\tT a_{\text{I}}(x) \sin(t \omega)  
\\&
+ \partial_tI(t,x)  \Big)\,
\partial_{x_i} I(t,x) \cos(t \omega) \d t,
\\
\lambda T \sum_{j=1}^{d}
\partial^2_{x_j} (a_{\text{I}})_i(x)  
= 
\int_{0}^{T} 2 \Big( & \nabla I(t,x)^\tT  a_{\text{R}}(x) \cos(t \omega) - \nabla I(t,x)^\tT a_{\text{I}}(x) \sin(t \omega)  
\\&
- \partial_tI(t,x)  \Big)\,
\Big( - \partial_{x_i} I(t,x) \sin(t \omega) \Big) \d t .
\end{align*}
Using trigonometric identities $2 \cos^2(y) = 1 + \cos(2y)$ and $2 \sin^2(y) = 1 - \cos(2y)$, we can equivalently state the last equations in vector form as
\begin{align*}
\lambda T \Delta a_{\text{R}}(x)
= 
\int_{0}^{T}\nabla I(t,x)\, \Big( & \nabla I(t,x)^\tT a_{\text{R}}(x) 2 \cos^2(t \omega)
\\&
- \nabla I(t,x)^\tT a_{\text{I}}(x) \sin(2 t \omega)
+ \partial_tI(t,x) 
2 \cos(t \omega) \Big) \d t 
\\
\lambda T \Delta a_{\text{I}}(x)
=
\int_{0}^{T}\nabla I(t,x)\, \Big(& - \nabla I(t,x)^\tT a_{\text{R}}(x) \sin(2 t \omega)
\\&
+ \nabla I(t,x)^\tT a_{\text{I}}(x) 2 \sin^2(t \omega)
- \partial_tI(t,x) 
2 \sin(t \omega) \Big) \d t.
\end{align*}
Then, the temporal Fourier transform \eqref{eq:FT} yields 
\begin{equation*}
\begin{aligned}
\lambda T \Delta a_{\text{R}}(x)
={}&
\left(\Re \mathcal F[\nabla I \nabla I^\tT](0,x) + \Re \mathcal F[\nabla I \nabla I^\tT](2\omega, x) \right) a_{\text{R}}(x)
\\&
+  \Im \mathcal F[\nabla I \nabla I^\tT](2\omega, x) a_{\text{I}}(x)
+ 2 \Re \mathcal F[\partial_t I \nabla I](\omega, x),
\\
\lambda T \Delta a_{\text{I}}(x)
={}&
\left(\Re \mathcal F[\nabla I \nabla I^\tT](x, 0) - \Re \mathcal F[\nabla I \nabla I^\tT](2\omega, x) \right) a_{\text{I}}(x)
\\&
+ \Im \mathcal F[\nabla I \nabla I^\tT](2\omega, x) a_{\text{R}}(x)
+ 2 \Im \mathcal F[\partial_t I \nabla I](\omega, x).
\end{aligned}
\end{equation*}
Since $I(t,x) \in \R$, we get $\mathcal F[\nabla I \nabla I^\tT](x, 0) \in \R^{d \times d}$. By returning 
to the complex-valued function $a$, 
we get the assertion.
\end{proof}

As the unknown $a$ depends only on the spatial variable $x$ and the Fourier coefficients of the derivatives of $I$ can be precomputed, the size of the linear system \eqref{eq: linear system corrected} is independent of the time period $T$. This causes a significant runtime improvement compared to the estimation of the optical flow velocity between all consecutive pairs of images.

\subsection{Model II} \label{sec:L1L1}

For Model II, 
the functional \eqref{eq:HS-general} becomes
\begin{equation} \label{eq:HS-l1l2}
E(v)
\coloneqq
\int_{\Omega} \int_{0}^{T}
\left(  \abs{G_v(t,x) }
+ \lambda \norm{\nabla v(t,x)}
\right) \d t \d x. 
\end{equation}
The data fidelity term resembles the $L_1$ norm on $[0,T]\times\Omega$ and is also known as \emph{least absolute deviations} \cite{BloSte84}.
A modified $L_1$ term has been applied in optical flow estimation between two images \cite{BroBruPapWei04}. In particular, they used the Charbonnier
penalty $\cD(G_v) = \sqrt{|G_v|^2+\varepsilon^2}$ for some $\varepsilon>0$ as a part of their data fidelity term.
The regularizer 
can be seen as the vectorial \emph{total variation} (TV) norm of $v$, see \cite[eq.\ (9)]{GolCre10}.

As \eqref{eq:HS-l1l2} is no longer differentiable, finding its minimizer via Euler--Lagrange equations is not possible. While a variety of optimization techniques for \eqref{eq:HS-l1l2} exist, we chose to work with a majorize--minimize technique known as the \emph{iteratively reweighted least squares} (IRLS) \cite{KumVerSto21, Jung.2024} or half-quadratic minimization \cite{SteidlPerschHielscherChanBergmann2016}. For each iteration of the algorithm, a global quadratic majorant of \eqref{eq:HS-l1l2} depending on the current iterate is constructed. Then it is minimized and its optimizer is set to be the next iterate. 

To construct a majorant of \eqref{eq:HS-l1l2}, we consider the \emph{Huber function}  
\[
h_{\varepsilon}\colon\R\to [\tfrac\varepsilon2,\infty),
\quad x \mapsto \begin{cases}
|x|, & |x| \ge \varepsilon, \\
\frac{|x|^2}{2 \varepsilon} + \frac{\varepsilon}{2}, & |x| < \varepsilon,
\end{cases}
\]
with smoothing parameter $\varepsilon>0$. It is a smoothed version of $|x|$ linked to its Moreau envelope \cite[Examples 6.54 and 6.62]{beck2017first} and satisfies the following properties. 

\begin{lemma}\label{l: Huber function}
Let $x,z \in \R$. For all $0 < \varepsilon_1 \le \varepsilon_2$, we have
\[
h_{\varepsilon_1}(x) \le h_{\varepsilon_2}(x)
\quad \text{and} \quad
\lim_{\varepsilon \downarrow 0} h_\varepsilon(x) = |x|.
\]
\end{lemma}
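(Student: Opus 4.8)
The plan is to treat the two claims separately and rely only on the explicit piecewise definition of $h_\varepsilon$. The limit is the easy part: fixing $x \in \R$, if $x \neq 0$ then for every $\varepsilon \le |x|$ we are in the regime $|x| \ge \varepsilon$, so $h_\varepsilon(x) = |x|$ is eventually constant in $\varepsilon$ and the limit is $|x|$; if $x = 0$ then $h_\varepsilon(0) = \varepsilon/2 \to 0 = |0|$ directly. So essentially no work is needed here.

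For the monotonicity $h_{\varepsilon_1}(x) \le h_{\varepsilon_2}(x)$ with $0 < \varepsilon_1 \le \varepsilon_2$, I would fix $x$ and split according to where $|x|$ sits relative to the two thresholds, which gives three exhaustive cases. When $|x| \ge \varepsilon_2$, both values equal $|x|$ and equality holds. When $|x| < \varepsilon_1$, both values come from the quadratic branch, and I would reduce the claim to showing that $g(\varepsilon) \coloneqq \frac{x^2}{2\varepsilon} + \frac{\varepsilon}{2}$ is non-decreasing on $(|x|,\infty)$; this follows from $g'(\varepsilon) = \frac{\varepsilon^2 - x^2}{2\varepsilon^2} \ge 0$ whenever $\varepsilon \ge |x|$, and since $\varepsilon_2 \ge \varepsilon_1 > |x|$ this yields $g(\varepsilon_1) \le g(\varepsilon_2)$.

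The one case that genuinely needs an inequality rather than a monotonicity argument is the intermediate regime $\varepsilon_1 \le |x| < \varepsilon_2$, where $h_{\varepsilon_1}(x) = |x|$ but $h_{\varepsilon_2}(x) = \frac{x^2}{2\varepsilon_2} + \frac{\varepsilon_2}{2}$. Here I would invoke the perfect-square identity
\[
\frac{x^2}{2\varepsilon_2} + \frac{\varepsilon_2}{2} - |x| = \frac{(|x| - \varepsilon_2)^2}{2\varepsilon_2} \ge 0,
\]
which gives $h_{\varepsilon_1}(x) = |x| \le h_{\varepsilon_2}(x)$ and, as a byproduct, also shows $h_\varepsilon(x) \ge |x|$ for every $\varepsilon > 0$, with equality exactly on the gluing locus $|x| = \varepsilon$; this confirms that the two branches match continuously. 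This identity is the crux of the whole argument and the point where one sees the pieces fit together.

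Finally, I would remark that although $h_\varepsilon$ equals the Moreau envelope of $|\cdot|$ shifted by $\varepsilon/2$, as noted after the definition, the monotonicity does \emph{not} follow from envelope properties alone: the Moreau envelope itself is non-increasing in its parameter, so the increasing shift $\varepsilon/2$ works against it and the two contributions must still be balanced exactly as in the case analysis above. Hence the elementary piecewise computation is the most transparent route, and the only mildly delicate point is verifying that the three cases are exhaustive and that the boundary case is settled by the square identity rather than by differentiation.
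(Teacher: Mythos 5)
Your argument is correct and complete: the three cases are exhaustive, the derivative computation handles the quadratic regime, and the perfect-square identity settles the intermediate case; the paper itself states this lemma without proof, treating exactly this elementary piecewise verification as routine. One microscopic inaccuracy in your side remark: $h_\varepsilon(x)=|x|$ holds for all $|x|\ge\varepsilon$, not only on the gluing locus $|x|=\varepsilon$ (equality in the square identity characterizes where the \emph{quadratic branch} touches $|x|$, not where $h_\varepsilon$ does), but this does not affect the proof.
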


Next, we replace $\abs{\cdot}$ by $h_\varepsilon$ in the energy functional \eqref{eq:HS-l1l2}. 
We consider the smoothed energy functional 
\[
E_{\varepsilon,\delta}(v)
\coloneqq \int_{\Omega} \int_{0}^{T}
\left( h_\varepsilon\left(G_v (t,x)) \right)
+ \lambda h_{\delta}\left(\norm{\nabla v(t,x)} 
\right) \right) \d t
\d x 
\ge E(v),
\]
with data fidelity and regularization smoothing parameters $\varepsilon,\delta>0$. It is a majorant of $E(v)$, however, it is not a quadratic functional. Thus, we proceed further by utilizing the following lemma. 

\begin{lemma}[{{\cite[Lem. 2.1]{KumVerSto21}}}]\label{l: Huber function quadratic majorant}
Let $x,z \in \R$ and $\varepsilon>0$. Define 
\begin{equation}\label{eq: majorizer q def}
q_\varepsilon(x, z)
\coloneqq \frac{|x|^2}{2 \max\{ \varepsilon, |z| \} } + \frac{1}{2}\max\{\varepsilon, |z|\}.
\end{equation}
Then, $q_\varepsilon$ is a quadratic majorant of $h_\varepsilon$ satisfying 
\[
h_\varepsilon(x) \le q_\varepsilon(x,z)
\quad \text{and} \quad  
h_\varepsilon(x) = q_\varepsilon(x,x).
\]
\end{lemma}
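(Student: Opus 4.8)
The plan is to exhibit $h_\varepsilon$ as the lower envelope of the one-parameter family of quadratics that defines $q_\varepsilon$, which renders both assertions immediate. For fixed $x$ set
\[
\psi_x(s) \coloneqq \frac{|x|^2}{2s} + \frac{s}{2}, \qquad s > 0,
\]
so that $q_\varepsilon(x,z) = \psi_x\!\left(\max\{\varepsilon,|z|\}\right)$. Observe that $q_\varepsilon$ depends on $z$ only through $m \coloneqq \max\{\varepsilon,|z|\} \ge \varepsilon$, that $s \mapsto \psi_x(s)$ is convex on $(0,\infty)$, and that for each fixed $z$ the map $x \mapsto q_\varepsilon(x,z) = |x|^2/(2m) + m/2$ is a quadratic in $x$ (so the qualifier ``quadratic'' in the statement is automatic).

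First I would establish the variational identity
\[
h_\varepsilon(x) = \min_{s \ge \varepsilon} \psi_x(s),
\]
with the minimum attained at $s^\ast = \max\{\varepsilon,|x|\}$. This is elementary calculus: the derivative $\psi_x'(s) = -|x|^2/(2s^2) + 1/2$ vanishes only at $s = |x|$, the unconstrained global minimizer of $\psi_x$ on $(0,\infty)$. If $|x| \ge \varepsilon$, this point is feasible and $\psi_x(|x|) = |x|$; if $|x| < \varepsilon$, then $\psi_x'(s) > 0$ on $[\varepsilon,\infty)$, so the constrained minimum sits at the boundary $s = \varepsilon$ and equals $|x|^2/(2\varepsilon) + \varepsilon/2$. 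These are exactly the two branches of $h_\varepsilon$, and in both cases the minimizer is $s^\ast = \max\{\varepsilon,|x|\}$.

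Both claims of the lemma then follow at once. Since $m = \max\{\varepsilon,|z|\} \ge \varepsilon$ is feasible, the majorization is
\[
q_\varepsilon(x,z) = \psi_x(m) \ge \min_{s \ge \varepsilon}\psi_x(s) = h_\varepsilon(x).
\]
For the tightness at the base point we put $z = x$, so that $m = \max\{\varepsilon,|x|\} = s^\ast$ is precisely the constrained minimizer, whence $q_\varepsilon(x,x) = \psi_x(s^\ast) = h_\varepsilon(x)$.

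The argument has essentially no hard part; the only thing to get right is the constrained minimization, namely recognizing that for $|x| < \varepsilon$ the minimizer is pushed to the constraint $s = \varepsilon$ rather than to the interior critical point $s = |x|$. If one prefers to bypass the envelope reformulation, the same two inequalities can be obtained directly: the bound $\psi_x(m) \ge |x|$ by the arithmetic--geometric mean inequality $\tfrac{|x|^2}{2m} + \tfrac{m}{2} \ge 2\sqrt{|x|^2/4} = |x|$ (which covers the branch $|x| \ge \varepsilon$), and the branch $|x| < \varepsilon$ by the elementary estimate $|x|^2 \le \varepsilon^2 \le \varepsilon m$ after clearing denominators in $\tfrac{|x|^2}{2\varepsilon} + \tfrac{\varepsilon}{2} \le \tfrac{|x|^2}{2m} + \tfrac{m}{2}$. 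The envelope viewpoint, however, unifies both cases and makes the equality condition transparent.
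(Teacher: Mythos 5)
Your proof is correct. Note that the paper itself gives no proof of this lemma: it is stated as a citation to \cite[Lem.\ 2.1]{KumVerSto21}, so there is no in-paper argument to compare against. Your envelope reformulation $h_\varepsilon(x)=\min_{s\ge\varepsilon}\bigl(\tfrac{|x|^2}{2s}+\tfrac{s}{2}\bigr)$ with minimizer $s^\ast=\max\{\varepsilon,|x|\}$ is a clean way to package the standard IRLS majorization argument: the convexity of $s\mapsto\psi_x(s)$, the location of its critical point at $s=|x|$, and the two cases $|x|\ge\varepsilon$ (interior minimizer, value $|x|$) versus $|x|<\varepsilon$ (boundary minimizer at $s=\varepsilon$, value $\tfrac{|x|^2}{2\varepsilon}+\tfrac{\varepsilon}{2}$) are all handled correctly, and both assertions of the lemma then drop out of feasibility of $m=\max\{\varepsilon,|z|\}$ and the identification $m=s^\ast$ at $z=x$. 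The AM--GM fallback you sketch for the two branches is also sound. One could quibble that the degenerate case $x=0$ makes the critical-point discussion vacuous ($\psi_0$ is simply increasing), but your argument still goes through there since the constrained minimum is attained at $s=\varepsilon$, consistent with $s^\ast=\max\{\varepsilon,0\}=\varepsilon$. No gaps.
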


For $u\colon[0,T]\times\Omega\to\R^d $, we set
\begin{equation}\label{eq:Eved}
E_{u, \varepsilon, \delta}(v)
\coloneqq
\int_{\Omega} \int_{0}^{T}
\left( q_\varepsilon\left( G_v (t,x), G_{u} (t,x) \right)
+ \lambda q_{\delta}\left(\norm{\nabla v(t,x)}  , \norm{\nabla u (t,x)} 
\right) \right) \d t
\d x.
\end{equation}
Then we have
\[
E(v) \le E_{\varepsilon, \delta}(v) \le E_{u, \varepsilon, \delta}(v).
\]
The iteration
\[
v^{k+1} = \argmin_{v} E_{v^k, \varepsilon, \delta}(v)
,\qquad k\in\NN,
\]
leads to
\begin{equation}\label{eq: IRLS weak convergence}
E_{\varepsilon, \delta}(v^{k+1}) \le E_{v^k, \varepsilon, \delta}(v^{k+1}) \le E_{v^k, \varepsilon, \delta}(v^k) = E_{\varepsilon, \delta}(v^{k})
\end{equation}
and the values $ E_{\varepsilon, \delta}(v^{k}) $ form a nonincreasing convergent sequence for $k\to\infty$.
For fixed parameters $\varepsilon$ and $\delta$, this procedure only minimizes $E_{\varepsilon, \delta}$ instead of \eqref{eq:HS-l1l2}. Since $E_{\varepsilon, \delta}$ goes to $E$ as the smoothing parameters vanish, a common strategy is to take sequences $ \varepsilon_k,\delta_k\downarrow0$ and construct the iterates 
\[
v^{k+1} \coloneqq \argmin_{v} E_{v^k, \varepsilon^k, \delta^k}(v).
\]
This step does not affect the convergence argument in \eqref{eq: IRLS weak convergence} as 
\[
E_{\varepsilon^{k+1}, \delta^{k+1}}(v^{k+1})
\le E_{\varepsilon^{k}, \delta^{k}}(v^{k+1})
\le E_{\varepsilon^{k}, \delta^{k}}(v^{k}).
\]
A more advanced derivation \cite[Thm. 6]{Jung.2024} shows that (in a slightly different setting) with a proper decay strategy of $\varepsilon^{k+1}, \delta^{k+1}$, the iterates $v^k$ are guaranteed to converge to a global minimum of $E(v)$ at a sublinear rate $\mathcal O(k^{-\frac{1}{3}})$. Furthermore, with an additional assumption, a linear convergence is achievable \cite[Thm. 9]{Jung.2024}.

Now we focus on the minimization of $E_{v^k, \varepsilon^k, \delta^k}$. In fact, it can be seen as a weighted version of \eqref{eq: linear system corrected}.

\begin{theorem} \label{thm:L1L1}
Let $\varepsilon, \delta > 0$ and $u\colon[0,T]\times\Omega\to\R^d$ in $C^1$ be fixed. Consider the flow velocity $v(t,x) = \Re(a(x) \e^{\i \omega t})$ in $C^2$.  Then the amplitude $a\colon\Omega\to\C^d$ of the minimizer~$v$ of $E_{u, \varepsilon, \delta}(v)$, see \eqref{eq:Eved}, satisfies
\begin{equation} \label{eq: linear system weighted}
\begin{aligned}
& \mathcal F[w_\cD \nabla I \nabla I^\tT](0,x) a(x) + \mathcal F[w_\cD \nabla I \nabla I^\tT](2\omega, x) \overline{a (x)} 
+ 2 \mathcal F[w_\cD \partial_t I \nabla I](\omega, x) \\
&\quad = \lambda  \sum_{j=1}^d \partial_{x_j} \left( \mathcal F[w_\cR](0,x) \partial_{x_j} a(x) + \mathcal F[w_\cR](2\omega, x) \overline{\partial_{x_j} a (x)} \right) 
\end{aligned}
\end{equation}
for all $x\in\Omega$,
where $w_\cD, w_\cR \colon [0,T]\times\Omega\to \R$ are defined by
\begin{equation}\label {eq:wRD}
w_\cD(t,x) \coloneqq \frac{1}{\max\{\varepsilon, |G_u (t,x)| \}}
\quad \text{and} \quad
w_\cR(t,x) \coloneqq \frac{1}{\max\{\delta, \norm{\nabla u (t,x)}  \}}.
\end{equation}
\end{theorem}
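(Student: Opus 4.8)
The plan is to recast the minimization of $E_{u,\varepsilon,\delta}$ as the same Euler--Lagrange computation as in the proof of \autoref{thm: Horn-Schunck harmonic}, now carrying the fixed, time-dependent weights through the temporal Fourier transform. First I would note that in the quadratic majorant \eqref{eq: majorizer q def} the second summand $\tfrac12\max\{\varepsilon,|z|\}$ is independent of the first argument. Hence, with $u$ fixed (so that $G_u$ and $\nabla u$, and therefore $w_\cD$ and $w_\cR$ from \eqref{eq:wRD}, are fixed), the integrand of \eqref{eq:Eved} equals $\tfrac12 w_\cD |G_v|^2 + \tfrac{\lambda}{2} w_\cR \norm{\nabla v}^2$ up to a $v$-independent constant. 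Minimizing \eqref{eq:Eved} over $v$ is thus equivalent to minimizing the weighted quadratic functional $\int_\Omega\int_0^T(\tfrac12 w_\cD|G_v|^2 + \tfrac{\lambda}{2}w_\cR\norm{\nabla v}^2)\d t \d x$, which is of exactly the type treated in Model~I, except for the weights.

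Next I would insert the ansatz \eqref{eq:va}, i.e.\ $v = a_{\text{R}}\cos(t\omega) - a_{\text{I}}\sin(t\omega)$ with $a = a_{\text{R}} + \i a_{\text{I}}$, and write down the Euler--Lagrange equations for the components $(a_{\text{R}})_i$ and $(a_{\text{I}})_i$, just as in the proof of \autoref{thm: Horn-Schunck harmonic}. The essential difference is that $w_\cD$ and $w_\cR$ depend on $t$, so the time integrals of $\cos^2(t\omega)$, $\sin^2(t\omega)$ and $\sin(t\omega)\cos(t\omega)$ no longer collapse to $T/2$, $T/2$ and $0$. Using $2\cos^2(t\omega)=1+\cos(2t\omega)$, $2\sin^2(t\omega)=1-\cos(2t\omega)$ and $2\sin(t\omega)\cos(t\omega)=\sin(2t\omega)$, together with the definition \eqref{eq:FT}, every such integral becomes an evaluation of a temporal Fourier transform: the matrix $w_\cD\nabla I\nabla I^\tT$ (with entries $w_\cD\partial_{x_i}I\,\partial_{x_k}I$) and the scalar $w_\cR$ are evaluated at $0$ and $2\omega$, whereas $w_\cD\partial_t I\,\nabla I$ is evaluated at $\omega$. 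Since all integrands are real, I would use $\mathcal F[\cdot](0)\in\R$ and $\mathcal F[\cdot](-\omega)=\overline{\mathcal F[\cdot](\omega)}$. After differentiating, a common factor $\tfrac12$ multiplies every term and is divided out.

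Finally I would return to the complex amplitude by adding the $(a_{\text{R}})_i$-equation and $\i$ times the $(a_{\text{I}})_i$-equation and collecting terms, exactly the recombination performed at the end of the proof of \autoref{thm: Horn-Schunck harmonic}. With $a_{\text{R}} + \i a_{\text{I}} = a$, $a_{\text{R}} - \i a_{\text{I}} = \overline{a}$, and the analogous identities for the spatial gradients $\partial_{x_j}a$, the data part collapses to $\mathcal F[w_\cD\nabla I\nabla I^\tT](0,x)\,a(x) + \mathcal F[w_\cD\nabla I\nabla I^\tT](2\omega,x)\,\overline{a(x)} + 2\mathcal F[w_\cD\partial_t I\,\nabla I](\omega,x)$ and the regularizing part to $\lambda\sum_{j=1}^d\partial_{x_j}(\mathcal F[w_\cR](0,x)\,\partial_{x_j}a(x) + \mathcal F[w_\cR](2\omega,x)\,\overline{\partial_{x_j}a(x)})$, which is precisely \eqref{eq: linear system weighted}.

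I expect the main obstacle to be the bookkeeping in the regularizer. Unlike Model~I, it does not reduce to a multiple of $\Delta a$: since $w_\cR$ varies in $t$, its Fourier coefficients $\mathcal F[w_\cR](0,x)$ and $\mathcal F[w_\cR](2\omega,x)$ are spatially varying, so they must stay inside the divergence $\sum_j\partial_{x_j}(\cdot)$ and cannot be commuted with the spatial derivative. The two delicate points are therefore to keep this divergence structure intact when differentiating the smoothness term, and to check that in the final recombination the $2\omega$-coefficient pairs with the conjugate $\overline{a}$ (resp.\ $\overline{\partial_{x_j}a}$) while the $0$-coefficient multiplies $a$ (resp.\ $\partial_{x_j}a$).
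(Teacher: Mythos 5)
Your proposal is correct and follows essentially the same route as the paper's proof: drop the $v$-independent constant from $q_\varepsilon$, reduce to the weighted quadratic functional $\tfrac12\int\int(w_\cD|G_v|^2+\lambda w_\cR\norm{\nabla v}^2)$, push the time-dependent weights into the temporal Fourier coefficients at $0$, $\omega$ and $2\omega$ via the Euler--Lagrange equations, and recombine real and imaginary parts into the complex system, keeping the divergence structure of the regularizer intact. The only cosmetic difference is that the paper expands the regularizer via $\norm{\Re(\nabla a\,\e^{\i\omega t})}^2=\tfrac12\norm{\nabla a}^2+\tfrac12\Re\big(\sum_{k,j}(\partial_{x_j}a_k)^2\e^{2\i\omega t}\big)$ in complex form before splitting, whereas you carry the $\cos/\sin$ split and trigonometric identities throughout; both yield the same coefficients.
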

\begin{proof}
As the second term in the definition \eqref{eq: majorizer q def} of $q_\varepsilon(x,z)$ does not depend on $x$, we can write $E_{u, \varepsilon, \delta}(v)$ as
\begin{align*}
E_{u, \varepsilon, \delta}(v) 
& = 
\frac{1}{2}\int_{\Omega} \int_{0}^{T}
\left( \frac{ |G_v (t,x)|^2 }{ \max\{\varepsilon, |G_u (t,x)| \} }
+ \lambda \frac{ \norm{\nabla v(t,x)} ^2}{ \max\{ \delta, \norm{\nabla u (t,x)}  \} }
\right) \d t 
\d x
+ \textrm{const} \\
& 
= 
\frac{1}{2}\int_{\Omega} \int_{0}^{T} w_\cD(t,x) |G_v (t,x)|^2 
+ \lambda w_\cR(t,x) \norm{\nabla v(t,x)} ^2
\d t 
\d x
+ \textrm{const.}
\end{align*}
We first want to compute the regularization term
$$
R(a(x))
\coloneqq
\int_0^T w_\cR(t,x) \norm{\nabla v(t,x)} ^2 \d t.
$$
Substituting $v(t,x) = \Re(a(x) \e^{\i \omega t})$ gives
\begin{align*}
\norm{\nabla v(t,x)} ^2 
& = \norm{ \Re(\nabla a(x) \e^{\i \omega t}) } ^2  \\
& = \tfrac{1}{4} \norm{ \nabla a(x) \e^{\i \omega t} + \conj{\nabla a}(x) \e^{-\i \omega t} } ^2 \\
& = \tfrac{1}{2} \norm{ \nabla a(x) } ^2 + \tfrac{1}{2} \Re \left( \left\langle \nabla a(x) \e^{\i \omega t}, \conj{\nabla a}(x) \e^{-\i \omega t}  \right \rangle_{\mathrm{Fro}} \right) \\
& = \frac{1}{2} \norm{ \nabla a(x) } ^2 + \frac{1}{2} \Re \left( \sum_{k,j=1}^d (\partial_{x_j} a_k (x))^2 \e^{2\i \omega t} \right).
\end{align*}
Integrating over $t$ yields
\begin{align*}
R(a(x))
&= \frac{1}{2} \int_0^T w_\cR(t,x) \left[ \norm{ \nabla a(x) } ^2 + \Re \left( \sum_{k,j=1}^d (\partial_{x_j} a_k (x))^2 \e^{2\i \omega t} \right) \right] \\
& =\frac{1}{2} \mathcal F[w_\cR](0,x) \norm{ \nabla a(x) } ^2 + \frac{1}{2} \Re \left( \sum_{k,j=1}^d (\partial_{x_j} a_k (x))^2 \mathcal F[w_\cR](x,-2\omega) \right) \\
& =\frac{1}{2} \mathcal F[w_\cR](0,x) \norm{ \nabla a(x) } ^2 + \frac{1}{2} \Re \left( \sum_{k,j=1}^d (\partial_{x_j} a_k(x))^2 \conj{\mathcal F[w_\cR]}(x,2\omega) \right),
\end{align*}
where in the last line we used that the temporal Fourier transform of the real-valued $w_\cR$ is conjugate symmetric. Splitting $a = a_{\text{R}} + \i a_{\text{I}}$ into its real and imaginary parts, we obtain
\[
\norm{ \nabla a(x) } ^2
= \sum_{k,j=1}^d (\partial_{x_j} (a_{\text{R}})_k (x) )^2 + (\partial_{x_j} (a_{\text{I}}(x))_k )^2
\]
and
\begin{align*}
& \Re \left(  \sum_{k,j=1}^d (\partial_{x_j} a_k(x))^2 \conj{\mathcal F[w_\cR]}(x,2\omega) \right) \\
& \quad = \sum_{k,j=1}^d \Re\left( (\partial_{x_j} a_k (x))^2 \right) \Re \mathcal F[w_\cR](x,2\omega) + \Im\left((\partial_{x_j} a_k (x))^2\right) \Im \mathcal F[w_\cR](x,2\omega) \\
& \quad = \sum_{k,j=1}^d \Big( (\partial_{x_j} (a_{\text{R}})_k (x))^2 - (\partial_{x_j} (a_{\text{I}})_k (x))^2 \Big) \Re \mathcal F[w_\cR](x,2\omega) \\
& \qquad + 2 \partial_{x_j} (a_{\text{R}})_k (x) \partial_{x_j} (a_{\text{I}})_k (x) \Im \mathcal F[w_\cR](x,2\omega).
\end{align*}
Combining all the terms and rearranging the summands ultimately gives the regularizer
\begin{align*}
R(\nabla a(x)) &= \sum_{k,j=1}^d \frac{1}{2} \left( \mathcal F[w_\cR](0,x) + \Re \mathcal F[w_\cR](x,2\omega)  \right) (\partial_{x_j} (a_{\text{R}})_k(x))^2 \\
& \quad + \sum_{k,j=1}^d \frac{1}{2} \left( \mathcal F[w_\cR](0,x) - \Re \mathcal F[w_\cR](x,2\omega)  \right) (\partial_{x_j} (a_{\text{I}})_k (x))^2 \\
& \quad + \sum_{k,j=1}^d \Im \mathcal F[w_\cR](x,2\omega) \partial_{x_j} (a_{\text{R}})_k(x) \partial_{x_j} (a_{\text{I}})_k (x). 
\end{align*}
The data fidelity term is transformed analogously to \autoref{thm: Horn-Schunck harmonic} resulting in
\begin{multline*}
D(a(x)) \coloneqq \int_{0}^{T} w_\cD(t,x) |G_v (t,x)|^2 \\
= \int_{0}^{T} w_\cD(t,x)
\left( \nabla I(t,x)^\tT a_{\text{R}}(x) \cos(t \omega) + \nabla I(t,x)^\tT a_{\text{I}}(x) \sin(- t \omega)  
+ \partial_tI(t,x) \right)^2 
\d t.    
\end{multline*}
In summary, the functional $E_{u, \varepsilon, \delta}(v)$ becomes
\[
E_{u, \varepsilon, \delta}(v) = \frac{1}{2}\int_{\Omega} D(a(x)) + \lambda R(\nabla a(x)) \d x + \textrm{const.}
\]
The Euler-Lagrange equations are then given by
\begin{align*}
\frac{\partial D(a(x)) }{\partial(a_{\text{R}})_k} 
& = \lambda \sum_{j=1}^d \partial_{x_j} \frac{\partial R(\nabla a(x))}{\partial (\partial_{x_j} (a_{\Re})_k (x))}, \\
\frac{\partial D(a(x)) }{\partial (a_{\text{I}})_k} 
& = \lambda \sum_{j=1}^d \partial_{x_j} \frac{\partial R(\nabla a(x))}{\partial (\partial_{x_j} (a_{\text{I}})_k(x))}
\end{align*}
for all $k\in [d]$.
The left-hand side can be transformed similarly to the proof of \autoref{thm: Horn-Schunck harmonic} with $w_\cD(t,x)$ entering the Fourier coefficients,
\begin{align*}
\frac{\partial D(a(x)) }{\partial a_{\text{R}}} 
={}&
\left( \Re \mathcal F[w_\cD \nabla I \nabla I^\tT](0,x) + \Re \mathcal F[w_\cD \nabla I \nabla I^\tT](2\omega, x) \right) a_{\text{R}}(x)
\\&
+  \Im \mathcal F[w_\cD \nabla I \nabla I^\tT](2\omega, x) a_{\text{I}}(x)
+ 2 \Re \mathcal F[w_\cD \partial_t I \nabla I](\omega, x), \\
\frac{\partial D(a(x)) }{\partial a_{\text{I}}} 
={}&
\left( \Re \mathcal F[w_\cD \nabla I \nabla I^\tT](0,x) - \Re \mathcal F[w_\cD \nabla I \nabla I^\tT](2\omega, x) \right) a_{\text{I}}(x)
\\&
+  \Im \mathcal F[w_\cD \nabla I \nabla I^\tT](2\omega, x) a_{\text{R}}(x)
+ 2 \Im \mathcal F[w_\cD \partial_t I \nabla I](\omega, x).
\end{align*}
The right-hand side reads as
\begin{align*}
\sum_{j=1}^d \partial_{x_j} \frac{\partial R(\nabla a(x))}{\partial (\partial_{x_j} (a_{\Re})_k (x))}
& = \sum_{j=1}^d \partial_{x_j} \left( \left( \mathcal F[w_\cR](0,x) + \Re \mathcal F[w_\cR](x,2\omega) \right) \partial_{x_j} (a_{\text{R}})_k (x) \right) \\
& \quad + \sum_{j=1}^d \partial_{x_j} \left( \Im \mathcal F[w_\cR](x,2\omega) \partial_{x_j} (a_{\text{I}})_k (x) \right), \\
\sum_{j=1}^d \partial_{x_j} \frac{\partial R(\nabla a_{\text{R}}(x), \nabla a_{\text{I}}(x))}{\partial (\partial_{x_j} (a_{\Im})_k (x))} 
& = \sum_{j=1}^d \partial_{x_j} \left( \left( \mathcal F[w_\cR](0,x) - \Re \mathcal F[w_\cR](x,2\omega) \right) \partial_{x_j} (a_{\text{I}})_k (x) \right) \\
& \quad + \sum_{j=1}^d \partial_{x_j} \left( \Im \mathcal F[w_\cR](x,2\omega) \partial_{x_j} (a_{\text{R}})_k (x) \right).
\end{align*}
Combining $a_{\text{R}}$ and $a_{\text{I}}$ into $a$ and noticing that $\mathcal F[w_\cD \nabla I \nabla I^\tT](0,x) \in \R^{d \times d}$ yields the desired system. 
\end{proof}

\begin{algorithm}[!b]
\caption{IRLS for Model II}\label{alg: IRLS}
\vspace{1mm}
\KwData{Images $I$, initial guess $a^0$, starting smoothing parameters $\varepsilon^0, \delta^0 > 0$, number of iterations $K \in \N$.}
\KwResult{Reconstructed density $a$.}
\For{$k=0,\ldots,K-1$}{
Compute the velocity $v^k(t,x) = \Re( a^k(x) \e^{\i t \omega})$.\\
Compute the weights $w_\cD$ and $w_\cR$ form \eqref{eq:wRD} with $v^k$, $\varepsilon^k$ and $\delta^k$.\\
Compute $a^{k+1}$, which is the minimizer of $E_{v^k,\varepsilon^k,\delta^k}$, by solving \eqref{eq: linear system weighted}.\\
Update $\varepsilon^{k+1}$, $\delta^{k+1}$ such that $\varepsilon^{k+1} \le \varepsilon^k$, $\delta^{k+1} \le \delta^k$. 
}
Set $a = a^K$.
\end{algorithm}

The reconstruction process is summarized in Algorithm \ref{alg: IRLS}.
Similarly to Model I, the size of the linear system \eqref{eq: linear system weighted} is independent of the time period $T$. However, in this case, the weights $w_\cD$ and $w_\cR$ are time-dependent and, thus, finding a solution to Model II with Algorithm \ref{alg: IRLS} is expected to be slower than to Model I. However, if the number of IRLS iterations $K$ is small, this difference is less pronounced.

We briefly discuss our update rule for the smoothing parameters $\varepsilon^k, \delta^k$ assuming that $\Omega$ has finite volume $|\Omega|$.
They are chosen as the minimum between the previous values and the scaled average of the current residual, i.e.,
\begin{align*}
\varepsilon^{k+1} & = \max\left\{ \min\left\{ \varepsilon^k, \frac{0.1}{T |\Omega| \sqrt{k+1}} \int_{\Omega} \int_{0}^{T} |G_v (t,x)| \d t \d x  \right\}, \frac{c}{\sqrt{k+1}} \right\}, \\
\delta^{k+1} & = \max\left\{ \min\left\{ \delta^{k}, \frac{0.1} {T |\Omega| \sqrt{k+1} } \int_{\Omega} \int_{0}^{T} \norm{\nabla v(t,x)}  \d t\d x \right\}, \frac{c}{\sqrt{k+1}} \right\},
\end{align*}
where $c = 10^{-8}$ is a small constant. The role of the maximum is to prevent the smoothing parameters from becoming zero. This choice does not come with theoretical convergence guarantees, but it resembles the choices in \cite[Thm. 6 and 9]{Jung.2024}. For a more comprehensive overview of possible strategies, we refer to \cite[§ 7.1]{Jung.2024}. Other than the decay strategy of the smoothing parameters, IRLS has no hyperparameters to tune, which makes its implementation and use straightforward compared to ADMM-based or primal-dual methods.

\subsection{Model III}\label{sec:L1L2}
Model III combines the approaches of the two previous sections by taking the $L_1$ data fidelity term (least absolute deviations), which is more robust to outliers,
and the $L_2$ regularizer, which promotes the smoothness of the reconstructed $a$.
In particular, we consider the energy functional
\begin{align*} 
E(v)
&\coloneqq
\int_{\Omega} \int_{0}^{T}
\left(  \abs{G_v(t,x) }
+ \lambda \norm{\nabla v(t,x)}^2
\right) \d t
\d x 
\\\nonumber
&=
\int_{\Omega}
\left(  \int_{0}^{T} \abs{G_v(t,x) }\d t
+ \frac{\lambda T}{2} \norm{\nabla a(x)}^2
\right) \d x
.
\end{align*}
Note that the data fidelity term scales linearly in $v$, while the regularizer scales quadratically. This leads to a signal- and noise-level dependent choice of the regularization parameter $\lambda$, unlike in the previous two scenarios.

The minimization can be performed with the IRLS method similarly to \autoref{sec:L1L1}.
Analogously to \eqref{eq:Eved}, we define for $\varepsilon>0$ and $u,v\colon[0,T]\times\Omega\to\R^d$ a quadratic majorant of $E$ at $u$ by
\begin{equation}\label{eq:Eved2}
E_{u, \varepsilon}(v)
\coloneqq
\int_{\Omega} \int_{0}^{T}
\left( q_\varepsilon\left( G_v (t,x), G_{u} (t,x) \right)
+ \lambda \norm{\nabla u (t,x)} ^2
\right) \d t
\d x,
\end{equation}
with $q_\varepsilon$ from \eqref{eq: majorizer q def}. Then, the functional \eqref{eq:Eved2} is iteratively minimized as in Algorithm \ref{alg: IRLS}.
The following theorem can be shown along the lines of \autoref{thm:L1L1} with $w_\cR \equiv 2$. 

\begin{theorem} \label{thm:L1L2}
Let $\varepsilon > 0$ and $u\colon[0,T]\times\Omega\to\R^d$ in $C^1$ be fixed. Consider the flow velocity $v(t,x) = \Re(a(x) \e^{\i \omega t})$ in $C^2$.  Then the amplitude $a\colon\Omega\to\C^d$ of the minimizer $v$ of  \eqref{eq:Eved2} satisfies
\begin{equation} 
\begin{aligned} \label{eq: linear system model 2}
& \mathcal F[w_\cD \nabla I \nabla I^\tT](0,x) a(x) + \mathcal F[w_\cD \nabla I \nabla I^\tT](2\omega, x) \overline{a (x)} 
+ 2 \mathcal F[w_\cD \partial_t I \nabla I](\omega, x) \\
&\quad = 2 \lambda T \Delta a(x) 
\end{aligned}
\end{equation}
for all $x\in\Omega$,
where $\mathcal F$ is the temporal Fourier transform \eqref{eq:FT} and
\begin{equation*}\label {eq:wD}
w_\cD(t, x) = \frac{1}{\max\{\varepsilon, |G_u (t,x)| \}}.
\end{equation*}
\end{theorem}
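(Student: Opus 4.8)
The plan is to follow the proof of \autoref{thm:L1L1} almost verbatim, exploiting that Model III differs from Model II only in the regularizer. The $q_\varepsilon$-majorization is applied solely to the non-smooth $L_1$ data term $\abs{G_v}$, which is identical in both models, while the quadratic regularizer $\lambda\norm{\nabla v}^2$ needs no smoothing and enters the majorant $E_{u,\varepsilon}(v)$ of \eqref{eq:Eved2} unchanged. Consequently, the data-fidelity part of $E_{u,\varepsilon}(v)$ agrees term by term with that of $E_{u,\varepsilon,\delta}(v)$ in \eqref{eq:Eved}, carrying the same weight $w_\cD$ from \eqref{eq:wRD}. Therefore the entire left-hand side of the resulting Euler--Lagrange system — the three Fourier coefficients $\mathcal F[w_\cD\nabla I\nabla I^\tT](0,x)$, $\mathcal F[w_\cD\nabla I\nabla I^\tT](2\omega,x)$, and $\mathcal F[w_\cD\partial_t I\nabla I](\omega,x)$ acting on $a$, $\overline a$, and as an inhomogeneity — transfers without change and reproduces the left-hand side of \eqref{eq: linear system model 2}.

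First I would bring $E_{u,\varepsilon}(v)$ into the normalized form $\tfrac12\int_\Omega\big(D(a(x)) + \lambda R(\nabla a(x))\big)\d x + \mathrm{const}$ used in the proof of \autoref{thm:L1L1}. Expanding $q_\varepsilon(G_v,G_u) = \tfrac12 w_\cD\abs{G_v}^2 + \mathrm{const}$ shows that the data part contributes exactly $D(a(x)) = \int_0^T w_\cD\abs{G_v}^2\d t$ once the prefactor $\tfrac12$ is pulled out. After pulling out that same $\tfrac12$ from the whole functional, the quadratic regularizer $\lambda\norm{\nabla v}^2$ must be written as $\tfrac12\cdot\lambda\cdot 2\norm{\nabla v}^2$, so it coincides with the Model II regularizer $\tfrac12\lambda R(\nabla a)$, $R(\nabla a)=\int_0^T w_\cR\norm{\nabla v}^2\d t$, under the choice of the \emph{constant} weight $w_\cR \equiv 2$. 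This is the one bookkeeping point requiring care: the factor $2$ (and not $1$) is forced by the $\tfrac12$ that the $q_\varepsilon$-majorant of the data term places in front of the entire functional.

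Then I would substitute $w_\cR\equiv 2$ into the regularizer side of \eqref{eq: linear system weighted}. The two temporal Fourier coefficients of the constant weight are computed directly: $\mathcal F[w_\cR](0,x) = \int_0^T 2\d t = 2T$, whereas $\mathcal F[w_\cR](2\omega,x) = 2\int_0^T \e^{-2\i\omega t}\d t = 0$, because $T = 2\pi p/\omega$ covers exactly $p$ full periods. The vanishing of the $2\omega$ coefficient is precisely what eliminates the conjugate term $\overline{\partial_{x_j}a}$ that is present in Model II and leaves only the isotropic part, so the regularizer side collapses to $\lambda\sum_{j=1}^d\partial_{x_j}\big(2T\,\partial_{x_j}a(x)\big) = 2\lambda T\,\Delta a(x)$. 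Combining this right-hand side with the unchanged data left-hand side yields \eqref{eq: linear system model 2}.

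The computations are entirely routine, and there is no genuine obstacle beyond the constant-factor bookkeeping above together with the observation that the off-resonance Fourier coefficient of a constant vanishes over $p$ full periods. As a cross-check that avoids invoking \autoref{thm:L1L1}, one may integrate the regularizer directly: writing $\nabla v = \Re(\nabla a\,\e^{\i\omega t})$, the $\e^{2\i\omega t}$ cross term integrates to zero over full periods, giving $\int_0^T\norm{\nabla v}^2\d t = \tfrac{T}{2}\norm{\nabla a}^2$, so the regularizer equals $\tfrac{\lambda T}{2}\int_\Omega\norm{\nabla a}^2\d x$ with Euler--Lagrange gradient $-\lambda T\Delta a$. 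Balancing this against the data gradient, which retains the factor $\tfrac12$ from the $q_\varepsilon$-majorant, and clearing that factor again produces $2\lambda T\Delta a(x)$ on the right-hand side, in agreement with \eqref{eq: linear system model 2}.
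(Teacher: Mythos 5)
Your proposal is correct and follows exactly the route the paper intends: it proves the statement ``along the lines of \autoref{thm:L1L1} with $w_\cR\equiv 2$'', including the paper's own explanation that the factor $2$ is forced by the $\tfrac12$ in the $q_\varepsilon$-majorant of the data term, and your computation $\mathcal F[w_\cR](0,x)=2T$, $\mathcal F[w_\cR](2\omega,x)=0$ correctly collapses the regularizer side of \eqref{eq: linear system weighted} to $2\lambda T\Delta a(x)$. The only difference is that you spell out the details the paper leaves implicit (and you silently fix the apparent typo $\norm{\nabla u}^2$ for $\norm{\nabla v}^2$ in \eqref{eq:Eved2}), which is entirely appropriate.
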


In \eqref{eq:Eved2}, the quadratic majorant $q_\varepsilon$ includes an additional factor of $\tfrac{1}{2}$ while the regularization term does not. It leads to an extra factor $2$ in the right-hand side of \eqref{eq: linear system model 2} or, equivalently, $w_\cR \equiv 2$. This is not the case for Model II, where the majorant $q_\varepsilon$ is used for both data fidelity and regularization term. Hence, the factor $\tfrac{1}{2}$ has no impact on \eqref{eq: linear system corrected}.

The solution to Model III can be found analogously to Algorithm \ref{alg: IRLS}, where $w_\cR \equiv 2$, the parameter $\delta$ does not appear and the linear system \eqref{eq: linear system model 2} is solved instead of \eqref{eq: linear system weighted}.

\section{Numerical Implementation} \label{sec:numerics_real}
In this section, we summarize implementation details for the numerical realization
of our minimization procedure. Most of these are somehow standard in optical flow computation,
but have to be adapted to our setting.
\\[2ex]
\textit{Discretization.}
We work in dimension $d = 2$ with discrete images of size $n_1 \times n_2$. The images can be seen as pointwise evaluations of the respective functions on a grid $\Gamma \coloneqq [n_1] \times [n_2]$ with notation $[n] = \{1, \ldots, n\}$. Consequently, in the implementation, all integrals $\int_{\Omega} \d x$ are replaced by sums over the grid $\sum_{x \in \Gamma}$. Similarly, the images do not form a continuum in the time variable, and thus we use $t \in [T]$ for $T \in \N$ time steps.  
Henceforth, we consider a discrete image as a 3D tensor $\zb I = [I(t, x)]_{t \in [T],\, x \in \Gamma}\in\R^{T\times n_1\times n_2}$.
We discretize the temporal Fourier transform \eqref{eq:FT} by
\begin{equation} \label{eq:DFT}
\zb F[\zb I] (\nu, x)
=
\frac1T \sum_{t=0}^{T-1} I(t, x)\, \e^{-\i t \nu}
,\qquad (\nu, x)\in[T]\times\Gamma.
\end{equation}

\textit{Derivatives.}
For the discrete approximation of the derivatives, we distinguish between $\nabla a$ and $\nabla I$. The partial derivative $\partial_1I$ is approximated via an image filtering $\zb I \star\zb h_{\mathrm{c}}$ in the spatial coordinates as follows. The convolution with the kernel
\[
\zb h_{\mathrm{c}} = \frac{1}{8}
\begin{bmatrix}
-1 & -2 & -1 \\
0 & 0 & 0 \\
1 & 2 & 1 \\   
\end{bmatrix}
\]
is defined by
\begin{equation} \label{eq:dI}
\zb D_{c,1} \zb I \coloneqq  \zb I \star\zb h_{\mathrm{c}}(t;k,j)\coloneqq \sum_{p,q=-1}^1 \zb I(t;k+p,j+q)\,\zb h_{\mathrm{c}}(p,q),\qquad (k,j)\in\Gamma,\ t\in[T],
\end{equation} 
where $\zb I$ is padded by zeros. Note that we index the elements of the matrix $\zb h_{\mathrm{c}}$ from $-1$ to $1$ in order to obtain central derivatives.
Analogously, we approximate $\partial_2I$ by $\zb I\star\zb h^\tT$.
For derivatives of $a$ and time derivative of $\zb I$, we use forward differences 
associated with the filter kernel 
$
\zb h_{\mathrm{fwd}}
=
\begin{bmatrix} 0 & -1 & 1 \end{bmatrix}^\tT,
$
i.e., with the $i$-th component $\zb a_i=[a_i(x)]_{x\in\Gamma}$, $i=1,2$, we approximate $\partial_1a_i(k,j)$ by
\begin{equation} \label{eq:da}
\zb D_1 \zb a_i(k,j) \coloneqq \zb a_i\star\zb h_{\mathrm{fwd}}(k,j)
=
\zb a_i(k+1,j) - \zb a_i(k,j)
,\qquad (k,j)\in\Gamma.
\end{equation}
In contrast to the forward differences, the usage of central differences for $\nabla a$ leads to reconstruction artifacts in our simulations. 
This is likely caused by the fact that our algorithms require the computation of $\zb D_1^\tT \zb D_1 \zb a(k,j) = \ba(k+1,j)+\ba(k-1,j)-2\ba(k,j)$, which is a good approximation of the second derivative in \eqref{eq: linear system corrected}.

\textit{Least squares implementation.}
Unlike in the classical Horn--Schunck method, which is outlined in \autoref{sec:HS-classic}, we solve the linear system \eqref{eq: linear system corrected} directly. That is, with $\zb a = \zb a_{\text{R}} +\i \zb a_{\text{I}} \in\C^{2\times n_1\times n_2}$ and the vectorization operator $\operatorname{vec}\colon\R^{2\times n_1\times n_2}\to\R^{2n_1n_2}$, the linear system \eqref{eq: linear system corrected} can be discretized in the form 
\begin{equation} \label{eq: discrete equation}
\zb C \begin{bmatrix}
\operatorname{vec}(\zb a_{\text{R}}) \\ \operatorname{vec}(\zb a_{\text{I}}) 
\end{bmatrix} = \zb b,
\end{equation}
where $\zb b$ depends only on the data $\zb I$ and the matrix $\zb C$ depends on the data $\zb I$ and the regularization parameter $\lambda$. The construction of $\zb C$ and $\zb b$ for the case $d=1$ is outlined in \autoref{sec:lsqr},
where we see that the square matrix $\zb C$ is symmetric positive semidefinite.

As the multiplication with $\zb C$ can be efficiently computed, this allows for the use of standard Matlab solvers such as the {conjugate gradient} (CG) method \texttt{pcg}, cf.\ \cite{Bar94}. Similarly, equations \eqref{eq: linear system weighted} and \eqref{eq: linear system model 2} corresponding to Model II and III, respectively, are weighted versions of \eqref{eq: linear system corrected} and can be rewritten analogously in the matrix form \eqref{eq: discrete equation}. 

\textit{Computational complexity.}
For Model I, the CG method is applied to find a solution of \eqref{eq: discrete equation}. To this end, we precompute $\zb b$ and Fourier coefficients in $\zb C$ at the expense of $\mathcal O(T n_1 n_2)$ operations, see \autoref{sec:lsqr} for details.  
The computational complexity of a single CG iteration is proportional to the cost of a matrix-vector product with the sparse matrix $\zb C$, which has a complexity of $\mathcal O(n_1 n_2)$. Consequently, for Model~I performing $K_{\textrm{CG}}$ iterations of the CG method, the total computational complexity is $\mathcal O((T + K_{\textrm{CG}}) n_1 n_2)$. 

For Models II and III, Algorithm \ref{alg: IRLS} is implemented. For each IRLS iteration, the computation of the velocity $v^k$ from given $a^k$, the weights $w_{\mathcal D}, w_{\mathcal R}$ by \eqref{eq:wRD} and the update of the smoothing parameters $\varepsilon,\delta$ require a total of $\mathcal O(T n_1 n_2)$ operations. Solving the resulting linear system has the same cost as in Model I, i.e., $\mathcal O((T + K_{\textrm{CG}}) n_1 n_2)$. Hence, running Models II and III for $K_{\textrm{irls}}$ IRLS iterations and $K_{\textrm{CG}}$ inner CG iterations requires $\mathcal O(K_{\textrm{IRLS}} (T + K_{\textrm{CG}}) n_1 n_2)$ operations. The constant in big $\mathcal O$ notation is a little smaller for Model III than for Model II because $w_\cR$ is constant and a part of the computations is not required.

\textit{Preprocessing.}
As a preprocessing step, the images are smoothed with a Gaussian kernel of standard deviation $\sigma = 0.65$. This step both filters out noise and smoothens sharp edges in the images, which have a strong impact on the reconstruction quality. 

\textit{Coarse-to-fine pyramid.}
We adopt the coarse-to-fine approach discussed in \cite[§ 4.2]{SunRotBla14}, which consists in constructing a pyramid of images $\zb I^{\ell}$ as follows. Starting with the top level $L$ as the initial image sequence $\zb I^{(L)} = \zb I$, the next level $\zb I^{(\ell)}$ is obtained by smoothing $\zb I^{(\ell+1)}$ with a Gaussian kernel and downsampling it with the factor $\eta$ such that $\zb I^{(L-1)}\in\R^{T\times\lceil\eta n_1\rceil\times\lceil\eta n_2\rceil}$ and so on. The standard deviation of the Gaussian kernel is commonly chosen as $\sigma = 1/\sqrt{2\eta}$.

The reconstruction starts with the smallest image $\zb I^{(1)}$.
In each level $\ell\in[L]$, we compute $\zb v^{(\ell)}$ as solution of the optical flow problem with the image $\zb I^{(\ell)}$, where we use as initialization in the $\ell$-th level the appropriately rescaled reconstruction $\zb v^{(\ell-1)}$. 
This yields a faster recovery as we need fewer CG iterations in each level due to the initialization. 

\textit{Median filtering.}
As a postprocessing step, a $5 \times 5$ median filter is applied to the reconstructions $\zb v^{(\ell)}$ obtained at each level of the coarse-to-fine pyramid.

\textit{Warping of derivatives.}
The coarse-to-fine approach provides intermediate solutions $\zb v^{(\ell)}$ at each level $\ell \in [L]$. As these estimated velocities are good approximations of $v$, they can be used to obtain a better approximation of the derivative $\nabla I$ in the variational model \eqref{eq:HS-general}, see 
\cite[§ 2.1]{BalBecEifFitSchSte19}.  

Denote by $\tilde{v}(t,x)$ the estimated velocity from the previous level. The first-order Taylor approximation of $\varphi$ yields
\[
\varphi(t+1,x) \approx \varphi(t,x) + \partial_t \varphi(t,x) = \varphi(t,x) + v (t,x).
\]
Rewriting \eqref{eq:dI} as 
\[
I(t,x) \approx I(t+1, x + v(t,x))  
\]
and utilizing the first-order Taylor approximation of $I(t+1, x + v (t,x))$ around the estimated velocity $x + \tilde v (t,x)$ gives
\begin{align*}
& I(t,x) 
\approx I(t+1, x + \tilde v(t,x)) - \nabla I(t+1, x  + \tilde v(t,x))^\tT (\tilde v(t,x)-v(t,x)).
\end{align*}
Hence, we have
\[
\widetilde{G_v}(t,x)
\coloneqq
\widetilde{\partial_t I(t,x)} + \widetilde{\nabla I(t,x)}^\tT v(t,x)
\approx
0  ,
\]  
where
\begin{align*}
\widetilde{\nabla I(t,x)} & \coloneqq \nabla I(t+1, x  + \tilde v(t,x)), \\
\widetilde{\partial_t I(t,x)} & \coloneqq I(t+1, x + \tilde v(t,x)) - I(t,x)  -  \widetilde{\nabla I(t,x)}^\tT \tilde v(t,x).
\end{align*}
We use $\widetilde{G_v}$ to approximate the data fidelity term.
Since $\widetilde{G_v}$ requires the evaluation of $I$ or its derivatives at points $x  + \tilde v(t,x)$, which might not be on the pixel grid $\Gamma$, we use a bicubic interpolation.

\section{Numerical Results} \label{sec:numerics}
In this section, we provide a synthetic and a real-world example
to demonstrate the performance of our three models and compare them with two optical flow algorithms created for general velocity fields, which do not need to be harmonic.
In particular, we compare the following algorithms:
\begin{itemize}
\item Model I, see \autoref{thm: Horn-Schunck harmonic},
\item Model II, see \autoref{thm:L1L1},
\item Model III, see \autoref{thm:L1L2},
\item 
the implementation of the classical Horn--Schunck method, cf. \autoref{sec:HS-classic}, available in Matlab's Image Processing Toolbox, and
\item 
the primal-dual hybrid gradient method (PDHGM) for optical flow from \cite{BalBecEifFitSchSte19,BalEifFitSchSte15} with an $L_1$ data fidelity term and an isotropic total variation regularizer akin to Model~II.
\end{itemize}
The latter two algorithms compute the velocity $v(t,\cdot)$ between \emph{consecutive pairs of images} $I(t,\cdot)$ and $I(t+1 \mod T, \cdot)$, $t \in [T]$. As the resulting $v(t,\cdot)$ may not be of the form \eqref{eq:va}, the corresponding amplitude $a$ is retrieved via the Fourier transform $ a(x) = 2 \mathcal F [v](\omega,x)$.
 
All numerical computations were performed on an 8-core Intel Core i7-10700 with 32\,GB memory running Matlab R2022a. The code for our numerical experiments will is available at \cite{code}.

\subsection{Generation of synthetic data}\label{sec: image simulation}

Before we come to the reconstruction in the next section, we describe how to generate the synthetic data:
given a velocity field $v \colon [0,T] \times \Omega \to \R$ and an initial image $I_0 \colon \Omega \to \R$,
we compute a sequence of distorted images $I(t,x)$ satisfying \eqref{eq:flow}.
Assume that $\varphi$ possesses an inverse function $\psi \colon [0,T] \times \Omega \to \Omega$ such that
\begin{equation} \label{eq:z}
    \varphi(t,\psi(t,x)) = x
    \qquad \forall (t,x)\in[0,T]\times\Omega.
\end{equation}
Substituting $x$ by $\psi(t,x)$ in \eqref{eq:bca}, we obtain
\begin{equation}\label{eq:I-consistency}
I(t,x)
=
I_0(\psi(t,x)).
\end{equation}
Thus, it remains to compute $\psi(t,x)$.
Differentiating \eqref{eq:z} for $t$, we have
\begin{align*}
0
&=
\frac{\dx}{\dx t} \varphi(t,\psi(t,x))
=
\partial_t \varphi(t,\psi(t,x))
+
\nabla \varphi(t,\psi(t,x)) \partial_t \psi(t,x)
\\
&=
v(t,x)
+
\nabla \varphi(t,\psi(t,x)) \partial_t \psi(t,x),
\end{align*}
where the Jacobian matrix is denoted by
$
\nabla \varphi(x)
=
\left[ \partial_{x_j} \varphi_i \right]_{i,j=1}^d
$.
Differentiating \eqref{eq:z} for $x$, we have
$$
\nabla x
=
\operatorname{Id}_{d\times d}
=
\nabla \varphi(t,\psi(t,x)) \nabla \psi(t,x)
$$
Hence, we obtain the initial value problem
\begin{equation}\label{eq: z generation}
\partial_t \psi(t,x)
=
- \nabla \psi(t,x) v(t,x).
\end{equation}
with $\psi(0,x) = x$. 
Each component of the last equation is equivalent to the optical flow equation \eqref{eq:of}, but with different initial values in each component.
For the numerical computation with the forward Euler method, we usually need to smoothen the gradient.

\subsection{Synthetic example}
\label{sec:numeric-sim}
In the first experiment, we test the reconstruction algorithms on a synthetic dataset. Let $n_1 = 200$, $n_2 = 206$, $T=300$ and consider the time-harmonic velocity field $v$ with frequency $\omega = 6 \pi/T$ and the complex amplitude $a\colon\Omega\to\R^2$ given by
\begin{align*}
a(x) = \begin{bmatrix}
0.8 (x_1 - \frac{n_1}{2}) \sin( \frac{\norm{x - x_0}^2}{2000}) \exp( - \frac{\norm{x - x_0}^2}{3300}) \\
10 \exp(-\frac{\norm{x-0.5 x_0}^2}{1650})
\end{bmatrix}
, \ \text{with} \ x_0 = \tfrac{1}{2} \begin{bmatrix} n_1 \\ n_2 \end{bmatrix},
\end{align*}
which is depicted in Figure \ref{fig: amplitude and images} (a) and (b). From $a$, we compute $v$ via \eqref{eq:va} and turn to the simulation of the synthetic frame sequence.
As an initial, undistorted image $I_0$ depicted in Figure \ref{fig: amplitude and images}~(c), we take the resized first frame of the ultrasound gel dataset \cite{Jor23}, which we will use in \autoref{sec:numerics_gel}.

We generate the frame sequence $\zb I = [I_0(\psi(t,x))]_{t\in[T],x\in\Gamma} \in \R^{T \times n_1 \times n_2}$ as described in \autoref{sec: image simulation}, where we evaluate $I_0$ at non-integer points or points outside the image boundary with a spline interpolation and nearest neighbor extrapolation. 


\begin{figure}
\pgfplotsset{
  colormap={parula}{
    rgb255=(53,42,135)
    rgb255=(15,92,221)
    rgb255=(18,125,216)
    rgb255=(7,156,207)
    rgb255=(21,177,180)
    rgb255=(89,189,140)
    rgb255=(165,190,107)
    rgb255=(225,185,82)
    rgb255=(252,206,46)
    rgb255=(249,251,14)}}
\centering
\subfloat[$|\zb a_1|$]{
\includegraphics[height= 82pt, trim={0 0 45 0},clip]{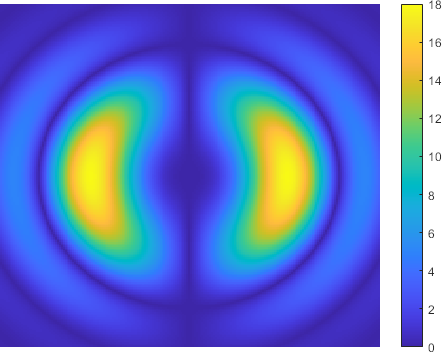}%
\begin{tikzpicture}
\pgfplotscolorbardrawstandalone[colormap name=parula,
    point meta min=0,point meta max=16, 
    colorbar style={height=82pt, width=1.4mm, xshift=0.0em, axis line style={draw opacity=0}, ytick={5,10,15}},
    yticklabel style={
      xshift=-1mm,yshift=0mm,font=\tiny
    } ]
\end{tikzpicture}\hspace{-3mm}}\
\subfloat[$|\zb a_2|$]{
\includegraphics[height= 82pt, trim={0 0 45 0},clip]{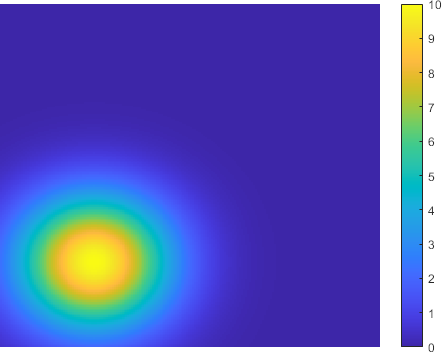}%
\begin{tikzpicture}
\pgfplotscolorbardrawstandalone[colormap name=parula,
    point meta min=0,point meta max=10, 
    colorbar style={height=82pt, width=1.4mm, xshift=0.0em, axis line style={draw opacity=0}, ytick={2,4,6,8}},
    yticklabel style={
      xshift=-1mm,yshift=0mm,font=\tiny
    } ]
\end{tikzpicture}\hspace{-1mm}}
\subfloat[$I(0,\cdot)$]{
\includegraphics[height= 82pt, trim={0 0 45 0},clip]{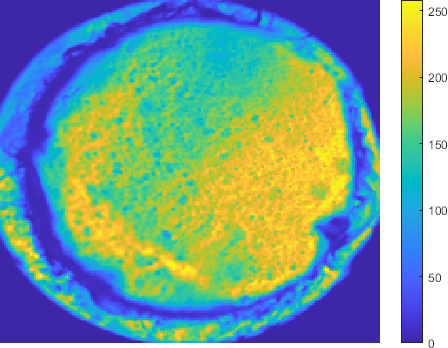}%
\begin{tikzpicture}
\pgfplotscolorbardrawstandalone[colormap name=parula,
    point meta min=0,point meta max=260, 
    colorbar style={height=82pt, width=1.4mm, xshift=0.0em, axis line style={draw opacity=0}, ytick={50,100,150,200,250}},
    yticklabel style={
      xshift=-1mm,yshift=0mm,font=\tiny
    } ]
\end{tikzpicture}\hspace{-2mm}}
\subfloat[$I_{\mathrm{noisy}}(0,\cdot)$]{
\includegraphics[height= 82pt, trim={0 0 45 0},clip]{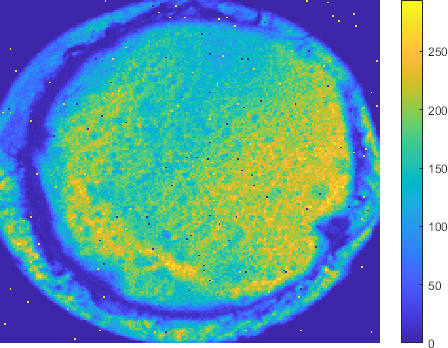}%
\begin{tikzpicture}
\pgfplotscolorbardrawstandalone[colormap name=parula,
    point meta min=0,point meta max=290, 
    colorbar style={height=82pt, width=1.4mm, xshift=0.0em, axis line style={draw opacity=0}, ytick={50,100,150,200,250}},
    yticklabel style={
      xshift=-1mm,yshift=0mm,font=\tiny
    } ]%
\end{tikzpicture}\hspace{-1mm}}
\caption{(a),\,(b): Absolute values of the two components of the ground truth amplitude~$\zb a$, (c): noiseless initial image $I_0$, (d): initial image $I_0$ corrupted by Poisson and salt-and-pepper noise.}\label{fig: amplitude and images}
\end{figure}

For the reconstruction of $\zb a \in \C^{2 \times n_1 \times n_2}$ from the generated images $\zb I$, the algorithms are executed with the following parameters. Model I uses a coarse-to-fine pyramid with 2 levels and downsampling factor $\eta = 0.8$, where for each level 50 CG iterations are performed. Models II and III use 4 levels instead. Model II performs 100 CG iterations per IRLS iteration and 5 IRLS iterations (per pyramid level), while Model III performs 25 and 4 iterations, respectively.
Matlab's Horn--Schunck does not use the coarse-to-fine approach and performs 1000 Horn--Schunck iterations per pair of images. PDHGM uses 15 pyramid levels with a downsampling ratio $\eta = 0.95$ and 50 iterations per level, which are the parameters used in \cite{BalBecEifFitSchSte19}. The regularization parameters are chosen by hand in order to receive a good result and are reported in the corresponding figures.  
We chose the mentioned numbers of iterations in a way such that more iterations yield only marginal improvements of the reconstructed images.


To measure the quality of reconstruction, the following metrics are considered. The relative error (RE) and relative image error (RIE) are given by
\[
\mathrm{RE}(\zb{\tilde a}, \zb a) \coloneqq \frac{\sum_{j=1}^2 \norm{\zb{\tilde a}_j - \zb a_j}^2}{\sum_{j=1}^2 \norm{\zb a_j }^2}
\quad \text{and} \quad 
\mathrm{RIE}(\zb{\tilde I}, \zb I) \coloneqq \frac{\sum_{t=1}^T \|\tilde I(t,\cdot) - I(t,\cdot)\|^2}{\sum_{t=1}^T \norm{I(t,\cdot)}^2},
\]
where $\|\cdot\|$ is the Euclidean norm on $\C^{n_1\times n_2}$, $\zb{\tilde a}$ is the reconstructed amplitude, and $\zb{\tilde I}$ is obtained via \eqref{eq:I-consistency} by applying to $I_0$ the time-harmonic displacement corresponding to~$\zb{\tilde a}$.
Note that the RE can be seen as an analog to the end-point error used in optical flow \cite{SunRotBla14}. We also report the structural similarity index measure (SSIM) \cite{ssim} computed for the four frames $\Re(\zb a_1)$, $\Im(\zb a_1)$, $\Re(\zb a_2)$, and $\Im(\zb a_2)$ with Matlab's \texttt{ssim}. The structural similarity index measure between the image sequences $\zb I$ and $\zb{\tilde I}$ is denoted by ISSIM. 

In our first test, we perform the reconstructions from the generated image sequence~$\zb I$ without noise. The two spatial components $a_j$ of the reconstructed vector field $a$ are shown in \autoref{fig: sim noiseless}. 
We see that all compared models yield reasonable results.
While Model~I and the reference implementation of Matlab's Horn--Schunck yield a slightly better RE, Models II and III give a better SSIM.
The error measurements for the PDHGM are in a similar magnitude, but visually the second component $a_2$ does not completely depict the structure of the ground truth.

\begin{figure}
    \centering
    \includegraphics[width=1\textwidth]{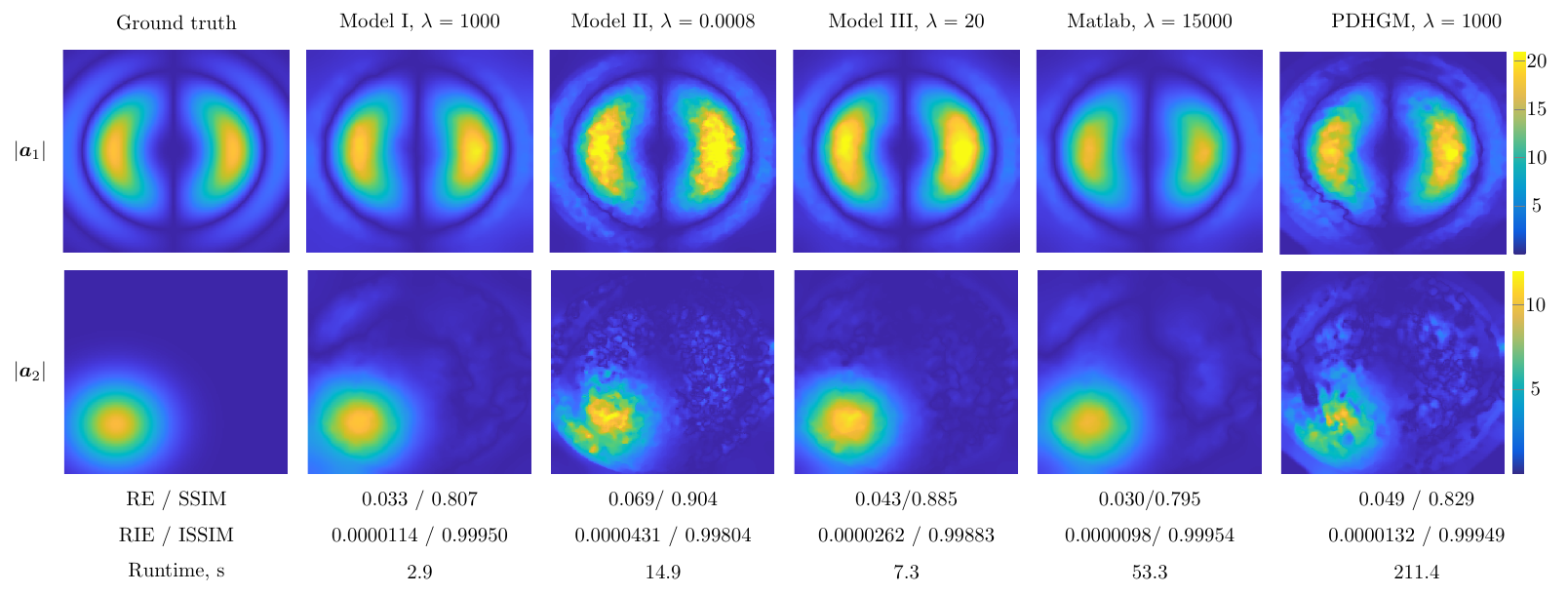}
    \caption{Absolute values of the spatial components $\zb a_j\in\C^{n_1\times n_2}$ of the reconstructed amplitude $\zb a = (\zb a_1, \zb a_2)$ from the noiseless measurements smoothed with Gaussian kernel. }
    \label{fig: sim noiseless}
\end{figure}

For our second test, the simulated images $\zb I$ are artificially corrupted by noise in two steps: first with the Poisson noise 
\[
\zb I_{\mathrm{Poisson}}(t,x) = \mathrm{Poisson}(\zb I(t,x)), \quad \text{for all} \quad x \in \Gamma, t \in [T] 
\]
and, additionally, $0.5\%$ of the pixels in $\zb I_{\mathrm{Poisson}}$ are corrupted by salt-and-pepper noise resulting in noisy images denoted by $\zb I_{\mathrm{noisy}}$. Note that the values of $\zb I$ are in the range $[0,264]$. The resulting $\mathrm{RIE}(\zb I_{\mathrm{noisy}}, \zb I)$ is $0.012$ and the first frame of $\zb I_{\mathrm{noisy}}$ is shown in Figure \ref{fig: amplitude and images} (d). The algorithmic setup for noisy data is the same as before. 

\begin{figure}[!b]
    \centering
    \includegraphics[width=1\textwidth]{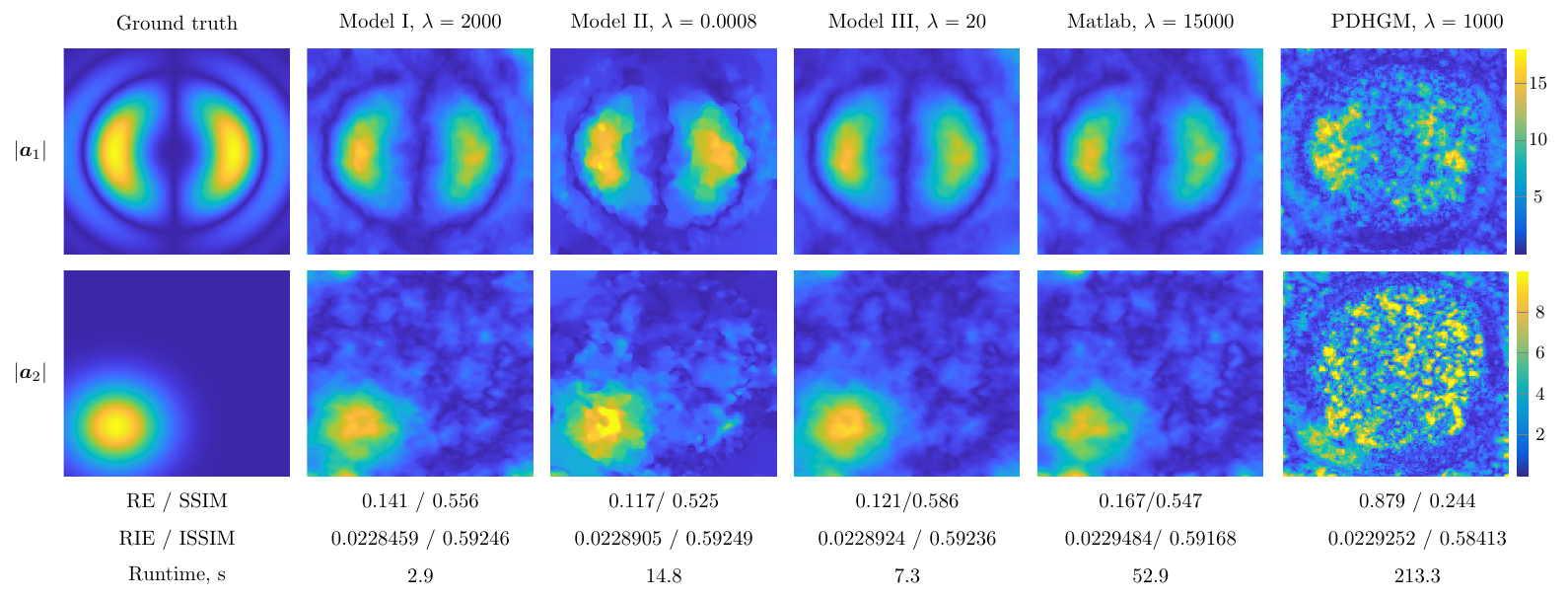}
    \caption{Absolute values of the spatial components of the reconstructed amplitude $\zb a = (\zb a_1, \zb a_2)$ from the noisy measurements $\zb I_\mathrm{n}$ smoothed with Gaussian kernel. }
    \label{fig: sim noisy}
\end{figure}

The reconstructions performed on the noisy images $\zb I_{\mathrm{noisy}}$ are shown in \autoref{fig: sim noisy}.
Due to the heavy-tailed noise, Models II and III perform best in this case, while also Model~I and Matlab's Horn--Schunck yield decent results.
The PDHGM algorithm performs considerably worse, its implementation seems to be very reliant on a good choice of the internal parameters in case of this noisy data.
Overall, the RIE must always be above a certain level due to the salt and pepper noise.
Note that we compute here the RIE between the noisy images $\zb I_{\mathrm{noisy}}$ and the images $\tilde{\zb I}$ obtained via \eqref{eq:I-consistency} with $\zb I_{\mathrm{noisy}}(0,\cdot)$ and velocity corresponding to the reconstructed amplitude $\tilde\ba$.

Considering computational times, Model~I is the fastest.
Models II and III take somewhat more time, since they have an outer IRLS loop and an inner CG loop, but the inner one requires fewer iterations than Model I since it is restarted with the previous solution.
The Matlab algorithm is much slower since it solves a $2n_1n_2\times 2n_1n_2$ linear system for each time step $t$, whereas Model~I solves only one $4n_1n_2\times 4n_1n_2$ linear system to compute $\zb a$. 

\subsection{Real-world data}
\label{sec:numerics_gel}

Now, we turn to the reconstruction from the real-world gel dataset, which has the size $(n_1,n_2)=(992,1024)$ and consists of $24$ frames stemming from $p=3$ repetitions with $T=8$ frames each. 
It was obtained by observing a phantom specimen consisting of ultrasound gel mixed with Silicon carbide scatterers. The motion was induced by targeted sound waves with a frequency of 800\,Hz perpendicular to the image plane, generated via a piezo actuator and amplifier. 
The resulting vibrations are recorded by a high-speed camera with a pixel size of $2\times2$\,µm. More details on the measurements are found in \cite{Jor23}.

For Model I, we perform 500 CG iterations without the coarse-to-fine pyramid. Models II and III incorporate the coarse-to-fine pyramid with 5 levels and downsampling factor $\eta = 0.5$. Model II performs 300 CG and 5 IRLS iterations, and Model III executes 50 CG and 6 IRLS iterations.
Matlab does not use coarse-to-fine approach and performs 1000 iterations of the Horn--Schunck algorithm per pair of images. PDHGM uses the same parameters as in \autoref{sec:numeric-sim} above. 

\begin{figure}[!t]
    \centering
    \includegraphics[width=1\textwidth]{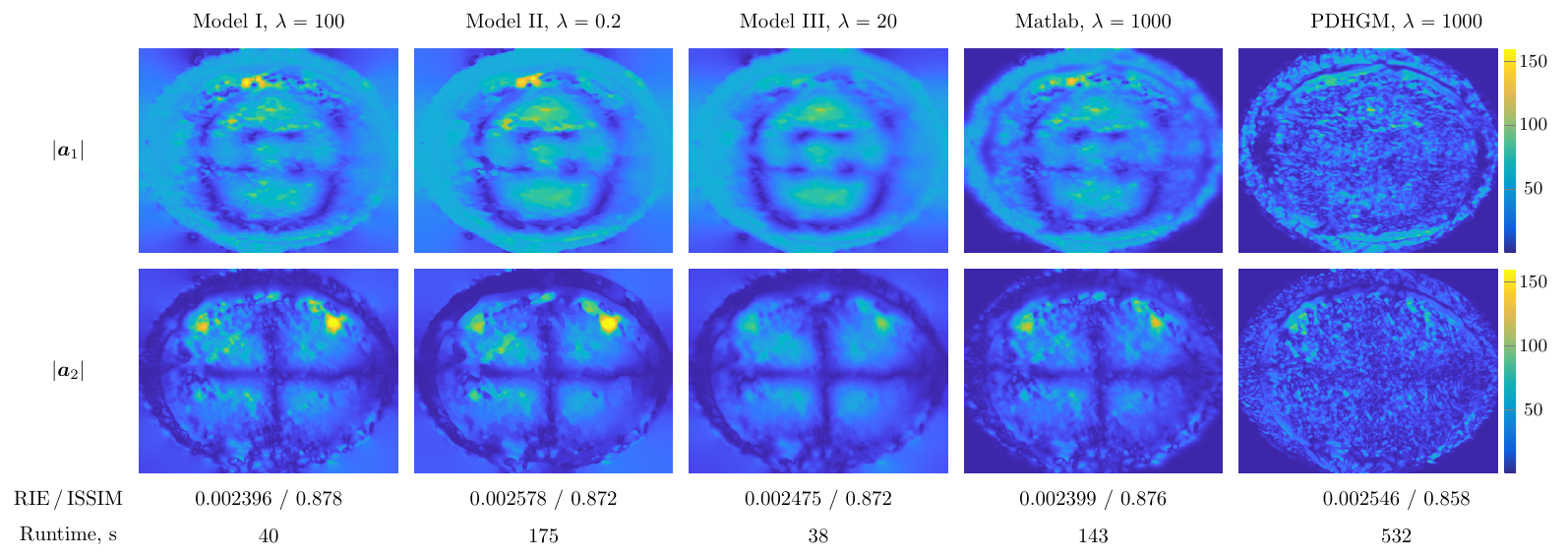}
    \caption{Absolute values of the spatial components of the reconstructed amplitude $\zb a = (\zb a_1, \zb a_2)$ from the gel dataset.}
    \label{fig: gel}
\end{figure}

The reconstructions are shown in \autoref{fig: gel}. We observe again that the accuracy of Model~I and Matlab almost matches. As only $T=24$ images are used, the speed benefit of Model I is less pronounced but still present. 
Model III is slightly faster than Model I as it uses the coarse-to-fine pyramid and performs fewer iterations on the larger images. Model II shows similar performance metrics while being slower than Model III. While the structural differences of $\zb a$ are visible, this has only a minor impact on both RIE and ISSIM.
The reconstruction with Model III is more consistent with the physically expected result for the homogeneous medium as it does not show such high peaks as the other models.
As with the noisy simulated data, the PDHGM yields an inferior result.
Furthermore, we provide videos of the velocity $\zb v$ and the images $\zb{\tilde I}$ in the supplementary material.

\section{Conclusion}
\label{sec:conclusion}

We have proposed three models for estimating the optical flow of time-harmonic oscillations.
For each of them, we have derived a fast algorithm taking advantage of the single-frequency representation of the underlying displacement velocity. The conducted numerical studies suggest that Model I is the best choice for low-noise scenarios, while Model III, which combines an $L_1$ data fidelity term with an $L_2$ smoothness term of the gradient, performs best in the presence of heavy noise. 



The next natural step in optical time-harmonic elastography consists in using our obtained velocity estimation for the reconstruction of the shear modulus of the examined material, see \cite{PapazoglouHirschBraunSack2012} or \cite[§~5]{ManducaOliphantDresnerMahowaldKruseAmromin2001}. This allows to infer information about elasticity and viscosity of observed specimen with applications to medical imaging.  


\subsection*{Acknowledgements}

This research was funded in whole or in part by the German Research Foundation (DFG): STE 571/19-1, project number 495365311, within the Austrian Science Fund (FWF) SFB 10.55776/F68 ``Tomography Across the Scales''. 
G.S. and I.S. acknowledge funding from the German Research Foundation (DFG) within the project BIOQIC (GRK2260).

For open access purposes, the authors have applied a CC BY public copyright license to any author-accepted manuscript version arising from this submission.

\printbibliography

\appendix

\

\section{Periodicity of $u$ versus $v$} \label{app:another_ex}
We look at another one-dimensional example where the displacement $u$ can be described by a single harmonic, namely
\begin{equation} \label{eq:harmonic-phi}
\varphi(t,x) = x + c(1-x^2) \sin(t)
,\qquad x\in[-1,1],\ t\ge0,
\end{equation}
with some $c>0$,
see \autoref{fig:traj} left.
Denoting again by $\psi$ the inverse of $\varphi$ as in \eqref{eq:z}, we get
$$
x=
\psi(t,y)
=\frac{1- \sqrt{ 4c^2 \sin^2(t) - 4c y \sin(t) + 1}}{2c\sin(t)}
$$
and the velocity
\begin{align*}
    v(t,y)
    &=
    \partial_t \varphi(t,\psi(t,y))
    =
    c(\psi(t,y)^2-1) \cos(t)\\
    &=
    \cos(t) \frac{\sqrt{ 4c^2 \sin^2(t) - 4cy \sin(t) + 1} + 2cy \sin(t) -1}{2c\sin^2(t)},
\end{align*}
see \autoref{fig:traj} right.
The flow velocity $v(t,\psi(t,y))$ is still $2\pi$-periodic in $t$, but it is not a single harmonic anymore.

\begin{figure}[ht]
 \newcommand{\cc}{0.1}
\tikzset{font=\small}
	\begin{tikzpicture}
	\begin{axis}[xlabel=$t$, 
	width=.45\textwidth, height=0.35\textwidth, xmin=0, xmax=6.28,
	xtick={0,1.57,3.14,4.71, 6.28},
	xticklabels={0,$\pi/2$,$\pi$,$3\pi/2$,$2\pi$},
    every axis y label/.style={at={(ticklabel* cs:1.05)}, anchor=south, },
    cycle list name=exotic, 
	legend pos=south west, legend style={cells={anchor=west}}]
	\addplot+ [mark=none, thick,smooth, samples=50, domain=0.001:6.5] {\cc*sin((180*x)/pi)};
	\addplot+ [mark=none, thick,smooth, samples=50, domain=0.001:6.5] {(\cc*24*sin((180*x)/pi))/25+1/5};
	\addplot+ [mark=none, thick,smooth, samples=50, domain=0.001:6.5] {(\cc*21*sin((180*x)/pi))/25+2/5};
	\addplot+ [mark=none, thick,smooth, samples=50, domain=0.001:6.5] {(\cc*16*sin((180*x)/pi))/25+3/5};
	\addplot+ [mark=none, thick,smooth, samples=50, domain=0.001:6.5] {(\cc*9*sin((180*x)/pi))/25+4/5};
	\end{axis}
	\end{tikzpicture}\hfill
	\begin{tikzpicture}
	\begin{axis}[xlabel=$t$, 
	width=.45\textwidth, height=0.35\textwidth, xmin=0, xmax=6.28,
	xtick={0,1.57,3.14,4.71,6.28},
	xticklabels={0,$\pi/2$,$\pi$,$3\pi/2$,$2\pi$},
    scaled ticks=false,
    yticklabel style={/pgf/number format/fixed,},
    every axis y label/.style={at={(ticklabel* cs:1.05)}, anchor=south, },
    cycle list name=exotic, 
	legend pos=outer north east, legend style={cells={anchor=west}}]
	\addplot+ [mark=none, thick] table[x index=0, y index=1] {figures/ex1.dat};
	\addplot+ [mark=none, thick] table[x index=0, y index=2] {figures/ex1.dat};
	\addplot+ [mark=none, thick] table[x index=0, y index=3] {figures/ex1.dat};
	\addplot+ [mark=none, thick] table[x index=0, y index=4] {figures/ex1.dat};
	\addplot+ [mark=none, thick] table[x index=0, y index=5] {figures/ex1.dat};
	\legend{$x=0$,$x=0.2$,$x=0.4$,$x=0.6$,$x=0.8$};
	\end{axis}
	\end{tikzpicture}
    \caption{Trajectories $\varphi(t,x)$ (left) and flow velocity $v(t,x)$ (right) corresponding to \eqref{eq:harmonic-phi} for different values of $x$ and fixed $c=0.1$ and $\omega=1$.\label{fig:traj}}
\end{figure}
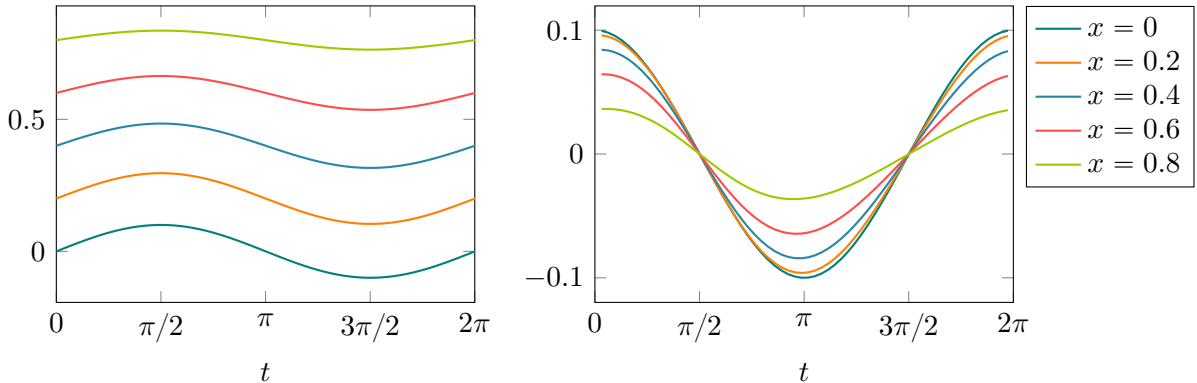

\section{The classical Horn--Schunck algorithm}
\label{sec:HS-classic}
The classical Horn--Schunck algorithm as proposed in \cite{HornSchunck1981} can be straightforwardly adopted to solve \eqref{eq: linear system corrected} in the following way. 
We again split up real and imaginary part, as the complex equation contains both $a$ and its conjugate $\overline{a}$. Then, in matrix form, \eqref{eq: linear system corrected} reads as 
\begin{equation*}
\lambda T
\begin{pmatrix}
    \Delta a_{\text{R}}(x)\\
    \Delta a_{\text{I}}(x)
\end{pmatrix}
=
C(x)
\begin{pmatrix}
    a_{\text{R}}(x)\\
    a_{\text{I}}(x)
\end{pmatrix}
+ 2 
\begin{pmatrix}
\Re \mathcal F[\partial_t I \nabla_x I^\tT](\omega, x) \\
\Im \mathcal F[\partial_t I \nabla_x I^\tT](\omega, x) 
\end{pmatrix}
\end{equation*}
with the coefficient matrix
\begin{equation}\label{eq: C coef matrix}
C(x)
=
\begin{pmatrix}
\Re \mathcal F[\nabla_x I \nabla_x I^\tT](2\omega, x) 
&
\Im \mathcal F[\nabla_x I \nabla_x I^\tT](2\omega, x)
\\
\Im \mathcal F[\nabla_x I \nabla_x I^\tT](2\omega, x) 
&
- \Re \mathcal F[\nabla_x I \nabla_x I^\tT](2\omega, x) 
\end{pmatrix}
+
\mathrm I_2\otimes \mathcal F[\nabla_x I \nabla_x I^\tT](x, 0),
\end{equation}
where $\otimes$ is the Kronecker product and $\mathrm I_2$ is the $2\times2$ identity matrix.
By rewriting the system as
\begin{equation*}
\lambda T
\begin{pmatrix}
    (\Delta - 1) a_{\text{R}}(x)\\
    (\Delta - 1) a_{\text{I}}(x)
\end{pmatrix}
=
\left[C(x) + \lambda T\, \mathrm I_{2d}\right]
\begin{pmatrix}
    a_{\text{R}}(x)\\
    a_{\text{I}}(x)
\end{pmatrix}
+ 2 
\begin{pmatrix}
\Re \mathcal F[\partial_t I \nabla_x I^\tT](\omega, x) \\
\Im \mathcal F[\partial_t I \nabla_x I^\tT](\omega, x) 
\end{pmatrix},
\end{equation*}
the right-hand side depends only on $a_{\text{R}}$ and $a_{\text{I}}$ evaluated at a single point $x$. If the left-hand side were independent of the values of $a$ at points other than $x$, we could significantly simplify the computational effort and solve a separate linear system for each $x$. 
However, the Laplacian requires the values of $a_{\text{R}}$ and $a_{\text{I}}$ in a neighborhood of $x$. To avoid a large linear system, one solves \eqref{eq: linear system corrected} with the following iterative procedure. Starting with initial $a_{\text{R}}^0$ and $a_{\text{I}}^0$, we construct for $k \ge 0$ new iterates $a_{\text{R}}^{k+1}$ and $a_{\text{I}}^{k+1}$ by solving for each $x$ the $2d\times 2d$ linear system 
\begin{equation*}
\lambda T
\begin{pmatrix}
    (\Delta - 1) a_{\text{R}}^k(x)\\
    (\Delta - 1) a_{\text{I}}^k(x)
\end{pmatrix}
=
\left[C(x) + \lambda T\, \mathrm I_{2d} \right]
\begin{pmatrix}
    a_{\text{R}}^{k+1}(x)\\
    a_{\text{R}}^{k+1}(x)
\end{pmatrix}
+ 2 
\begin{pmatrix}
\Re \mathcal F[\partial_t I \nabla_x I^\tT](\omega, x) \\
\Im \mathcal F[\partial_t I \nabla_x I^\tT](\omega, x) 
\end{pmatrix}.
\end{equation*}
When discretizing $a$ on a uniform grid, $(\Delta-1)a(x)$ is implemented as the mean value over the grid points adjacent to $x$.

\section{Construction of the linear system for Model I}
\label{sec:lsqr}

In this section, we consider the discretization of \eqref{eq: linear system corrected} in a one-dimensional setting. Let $d=1$, $n_1 = n$ and assume that $T=2p\pi/\omega \in \N$ for some $p\in\N$. 
While it is possible to directly replace the operators in \eqref{eq: linear system corrected} with their discrete analogues, we first go a step back and start with the minimization of the energy functional $E$ for Model I. Its empirical counterpart $E_\mathrm{emp}\colon\C^n\to\R$ for observed images $\zb I \in \R^{T \times n}$ is given by
\begin{align*}
E_\mathrm{emp}(\zb a) 
& =
\sum_{x \in \Gamma} \sum_{t = 0}^{T-1}
\left( 
\left( \zb D_{c} \zb I(t,x)^\top \zb a_{\text{R}}(x) \cos(-t \omega) + \zb D_c \zb I(t,x)^\top \zb a_{\text{I}}(x) \sin(- t \omega)  
+ \zb D_t \zb I(t,x) \right)^2 \right. \\
& \left. \qquad \qquad + \lambda | \zb D \zb a_{\text{R}}(x) \cos(-t \omega) + \zb D \zb a_{\text{I}}(x) \sin(- t \omega) |^2
\right),
\end{align*}
where $\Gamma = [n]$ is the pixel grid, $\zb D_c$ denotes the one-dimensional discrete central derivatives as in \eqref{eq:dI} and  
$\zb D, \zb D_t$ are the forward difference operators \eqref{eq:da} in space and time, respectively. As $d = 1$, we leave out the extra index for the simplicity of notation.

Minimizing $E_\mathrm{emp}$ can be viewed as a least squares problem. Introducing the matrices  
\[
\zb y \coloneqq - \begin{bmatrix}
\zb D_t \zb I(\cdot,0) \\
\vdots  \\
\zb D_t \zb I(\cdot,T-1) \\
\end{bmatrix},
\quad
\zb R \coloneqq 
\begin{bmatrix}
\zb D & 0 \\
0 & \zb D     
\end{bmatrix},
\quad
\zb M \coloneqq 
\begin{bmatrix}
\zb M_{0,1} & \zb M_{0,2} \\
\vdots  & \vdots \\
\zb M_{T-1,1} & \zb M_{T-1,2} 
\end{bmatrix},
\]
with blocks 
\[
\zb M_{t,1} \coloneqq \diag(\zb D_c \zb I(t,\cdot)) \cos( - 2 \pi t p / T),
\quad \text{and} \quad 
\zb M_{t,2} \coloneqq \diag(\zb D_c \zb I(t,\cdot)) \sin(- 2 \pi t p / T),
\]
and simplifying the regularizer as in \eqref{eq: regularizer simplified} transforms the energy functional $E_\mathrm{emp}$ into
\begin{equation*}\label{eq: linear system Horn Schunck}
E_\mathrm{emp} (\zb a) 
= 
\left\| \zb M \begin{bmatrix}
\zb a_{\text{R}} \\ \zb a_{\text{I}} 
\end{bmatrix} - \zb y \right\|_2^2 + \frac{\lambda T}{2} \left\| \zb R
\begin{bmatrix}
\zb a_{\text{R}} \\ \zb a_{\text{I}} 
\end{bmatrix} 
\right\|_2^2
=
\left\| \begin{bmatrix} \zb M\\ \sqrt{\frac{\lambda T}{2}} \zb R \end{bmatrix} \begin{bmatrix}
\zb a_{\text{R}} \\ \zb a_{\text{I}} 
\end{bmatrix} 
- \begin{bmatrix} \zb y \\ 0 \end{bmatrix} \right\|_2^2.
\end{equation*} 
Its normal equation, which is the discrete analogue of the Euler--Lagrange equations that lead us to \eqref{eq: linear system corrected}, is given by
\begin{equation} \label{eq:HS-disc}
\zb C \begin{bmatrix} \zb a_{\text{R}} \\ \zb a_{\text{I}} \end{bmatrix} 
\coloneqq \left(\zb M^\top \zb M + \frac{\lambda T}{2} \zb R^\top \zb R \right)  \begin{bmatrix} \zb a_{\text{R}} \\ \zb a_{\text{I}} \end{bmatrix} 
= \zb M^\top \zb y
\eqqcolon \zb b,
\end{equation}
which is precisely \eqref{eq: discrete equation}. Moreover, $\zb C$ is square, symmetric and positive semidefinite. 

It remains to argue that \eqref{eq:HS-disc} corresponds to the direct discretization of \eqref{eq: linear system corrected}.
For this, we observe that the matrix $\zb C$ and the vector $\zb b$ admit the block structure
\[
\zb C = \begin{bmatrix}
\zb C_{1,1} & \zb C_{1,2} \\
\zb C_{2,1} & \zb C_{2,2}
\end{bmatrix},
\quad 
\zb b = \begin{bmatrix}
\zb b_1 \\
\zb b_2 \\
\end{bmatrix},
\]
with
\begin{align*}
\zb C_{1,1} & = \sum_{t = 0}^{T-1} \diag(\zb D_c \zb I(t,\cdot) \circ \zb D_c \zb I(t,)) \cos^2 ( - t \omega) + \frac{\lambda T}{2} \zb D^\top \zb D \\
& = \sum_{t = 0}^{T-1} \diag(\zb D_c \zb  I(t,\cdot) \circ \zb D_c \zb I(t,\cdot)) \left[ \frac{1}{2 } + \frac{\cos (- 2 \pi t p / T)}{2} \right] + \frac{\lambda T}{2} \zb D^\top \zb D \\
& = \diag \left( \frac{1}{2} \zb F [\zb D_c \zb I \circ \zb D_c \zb I] (0,\cdot) + \frac{1}{2} \Re \zb F [\zb D_c \zb I \circ \zb D_c \zb I] (2 p, \cdot) \right) + \frac{\lambda T}{2} \zb D^\top \zb D, 
\end{align*}
and, similarly,
\begin{align*}
\zb C_{1,2} & = \zb C_{2,1} = \diag \left( \frac{1}{2} \Im \zb F [\zb D_c \zb I\circ \zb D_c \zb I] (2 p, \cdot) \right), \\
\zb C_{2,2} & = \diag \left( \frac{1}{2} \zb F [\zb D_c \zb I \circ \zb D_c \zb I] (0, \cdot) - \frac{1}{2} \Re \zb F [\zb D_c \zb I\circ \zb D_c \zb I] (2p, \cdot) \right) + \frac{\lambda T}{2} \zb D^\top \zb D, \\
\zb b_1 & = \Re\left( \zb F [\zb D_c \zb I \circ \zb D_t \zb I] (p, \cdot) \right),
\qquad \zb b_2  = \Im\left( \zb F [\zb D_c \zb I \circ \zb D_t \zb I] (p, \cdot) \right),
\end{align*}
where $\circ$ denotes the entrywise product and $\zb F$ is the discrete Fourier transform \eqref{eq:DFT}.
Since $\zb D^\top \zb D$ is a discrete approximation of the negative Laplacian $-\Delta$,
we see that \eqref{eq:HS-disc} is indeed a discretization of \eqref{eq: linear system corrected}.

\end{document}